\newcommand{\diagdots}[3][-25]{%
  \rotatebox{#1}{\makebox[0pt]{\makebox[#2]{\xleaders\hbox{$\cdot$\hskip#3}\hfill\kern0pt}}}%
}
\newtheorem{theorem}{Theorem}[section]
\newtheorem{lemma}[theorem]{Lemma}
\newtheorem{corollary}[theorem]{Corollary}
\newtheorem{proposition}[theorem]{Proposition}
\newtheorem{remark}[theorem]{Remark}
\theoremstyle{definition}
\newtheorem{observation}[theorem]{Observation}
\newtheorem{de}[theorem]{Definition}
\def\Sym{\mathop{\mathrm{Sym}}\nolimits}
\def\an{\mathop{\mathrm{Ann}}\nolimits}
\def\al{\mathop{\mathrm{Ann}_L}\nolimits}
\def\ar{\mathop{\mathrm{Ann}_R}\nolimits}
\def\anl{\mathop{\mathcal{A}_L}\nolimits}
\def\anr{\mathop{\mathcal{A}_R}\nolimits}
\def\FF{{\mathbb F}}
\def\NN{{\mathbb N}}
\def\ZZ{{\mathbb Z}}
\def\BB{{\mathbb B}}
\def\Aut{\mathrm{Aut}}
\def\M{{\mathrm M}}
\def\N{{\mathrm N}}
\def\Z{{\mathrm Z}}
\def\V{{\mathrm V}}
\DeclareFontFamily{U}{mathx}{\hyphenchar\font45}
\DeclareFontShape{U}{mathx}{m}{n}{
      <5> <6> <7> <8> <9> <10>
      <10.95> <12> <14.4> <17.28> <20.74> <24.88>
      mathx10
      }{}
\DeclareSymbolFont{mathx}{U}{mathx}{m}{n}
\DeclareMathSymbol{\bigtimes}{1}{mathx}{"91}
\begin{document}

\title[Automorphism group of zero-divisor digraph]{The automorphism group of the zero-divisor digraph of matrices over an antiring}
\author[D. Dol\v zan]{David Dol\v zan}
\address{Faculty of Mathematics and Physics, University of Ljubljana, Jadranska 21, 1000 Lju\-blja\-na, Slovenia\newline
\indent IMFM, Jadranska 19, 1000 Ljubljana, Slovenia} \email{david.dolzan@fmf.uni-lj.si}
\author[G. Verret]{Gabriel Verret}
\address{Department of Mathematics, University of Auckland, Private Bag 92019, Auckland 1142, New Zealand} \email{g.verret@auckland.ac.nz}

\date{\today}

\keywords{Automorphism group of a graph, Zero-divisor graph, Semiring}
\subjclass[2010]{05C60, 16Y60, 05C25}
\begin{abstract}
We determine the automorphism group of the zero-divisor digraph of the semiring of matrices over an antinegative commutative semiring with a finite number of zero-divisors.
\end{abstract}
\maketitle


\section{Introduction}

In recent years, the zero-divisor graphs of various algebraic structures have received a lot of attention, since they are a useful tool for revealing the algebraic properties through their graph-theoretical properties. In 1988, Beck \cite{beck88} first introduced the concept of the zero-divisor graph of a commutative ring. In 1999, Anderson and Livingston \cite{andliv99} made a slightly different definition of the zero-divisor graph in order to be able to investigate the zero-divisor structure of commutative rings. In 2002, Redmond \cite{redmond02} extended this definition to also include  non-commutative rings. Different authors then further extended this concept to semigroups  \cite{dem02}, nearrings \cite{cann05} and  semirings \cite{atani08}.

Automorphisms of graphs play an important role both in graph theory and in algebra, and  finding the automorphism group of certain graphs is often very difficult. Recently, a lot of effort has been made to determine the automorphism group of various zero-divisor graphs.  In \cite{andliv99}, Anderson and Livingston proved that $\Aut(\Gamma(\ZZ_n))$ is a direct product of symmetric groups for $n \geq 4$ a non-prime integer. In the non-commutative case, the case of matrix rings and semirings is especially interesting. Thus,   it was shown in \cite{han} that, when $p$ is a prime, $\Aut(\Gamma(\M_2(\ZZ_p)))\cong\Sym(p + 1)$. More generally,  it was proved in \cite{park}, that  $\Aut(\Gamma(\M_2(\FF_q)))\cong\Sym(q+1)$. In \cite{wong}, the authors determined the automorphism group of the zero-divisor graph of all rank one upper triangular matrices over a finite field, and in \cite{wang} they determined the automorphism group of the zero-divisor graph of the matrix ring of all upper triangular matrices over a finite field. Recently, the automorphism group of the zero-divisor graph of the complete matrix ring of matrices over a finite field have been found independently in \cite{wang1} and \cite{zhou}.

In this paper, we study the zero-divisor graph of matrices over commutative semirings. The theory of semirings has many applications in optimization theory, automatic control, models of discrete event networks and graph theory (see e.g. \cite{baccelli, cuninghame, li2014, zhao2010}) and the zero-divisor graphs of semirings were recently studied in \cite{atani10, dolzan, patil}.  For an extensive theory of semirings, we refer the reader to \cite{hebisch}. There are many natural examples of commutative semirings, for example, the set of nonnegative integers (or reals) with the usual operations of addition and multiplication. Other examples include distributive lattices, tropical semirings, dio\"{\i}ds, fuzzy algebras, inclines and bottleneck algebras. 

The theory of matrices over semirings differs quite substantially from the one over rings, so the methods we use are necessarily distinct from those used in the ring setting. The main result of this paper is the determination of the automorphism group of the zero-divisor digraph of a semiring of matrices over an  antinegative commutative semiring with a finite number of zero-divisors (see Theorem~\ref{characterization}).

\section{Definitions and preliminaries}
\subsection{Digraphs}
A \emph{digraph} $\Gamma$ consists of a set $\V(\Gamma)$ of \emph{vertices}, together with a binary relation $\rightarrow$ on $\V(\Gamma)$. An \emph{automorphism} of $\Gamma$ is a permutation of $\V(\Gamma)$ that preserves the relation $\rightarrow$.  The automorphisms of $\Gamma$ form its \emph{automorphism group} $\Aut(\Gamma)$. 

Let $\Gamma$ be a digraph and let $v\in \V(\Gamma)$. We write  $\N^-(v) = \{u \in \V(\Gamma)\colon u\rightarrow v\}$ and $\N^+(v) = \{u \in \V(\Gamma)\colon v\rightarrow u\}$. If, for $u,v \in \V(\Gamma)$, we have $\N^-(u)=\N^-(v)$  and $\N^+(u)=\N^+(v)$, then we say $u$ and $v$ are \emph{twin} vertices. The relation $\sim$ on $\V(\Gamma)$, defined by $u \sim v$ if and only if $u$ and $v$ are twin vertices, is clearly an equivalence relation preserved by $\Aut(\Gamma)$. We will denote by $\overline{\Gamma}$ the factor digraph $\Gamma / \sim$. For $v \in \V(\Gamma)$, we shall denote by $\overline{v}$ the image of $v$ in $\overline{\Gamma}$ and, for  $\sigma\in\Aut(\Gamma)$, by $\overline{\sigma}$ the induced automorphism of $\overline{\Gamma}$. An automorphism $\sigma\in\Aut(\Gamma)$ is called \emph{regular} if $\overline{\sigma}$ is trivial.

\subsection{Semirings}
A \emph{semiring} is a set $S$ equipped with binary operations $+$ and $\cdot$ such that $(S,+)$ is a commutative monoid with identity element 0, and $(S,\cdot)$ is a semigroup. Moreover, the operations $+$ and $\cdot$ are connected by distributivity and 0 annihilates $S$. 

A semiring $S$ is \emph{commutative} if $ab=ba$ for all $a,b \in S$, and  \emph{antinegative} if, for all $a, b \in S$, $a+b=0$ implies that $a=0$ or $b=0$. Antinegative semirings are also called \emph{zerosum-free semirings} or \emph{antirings}.  The smallest
nontrivial example of an antiring is the \emph{Boolean antiring} $\BB=\{0,1\}$ with addition and multiplication defined so that 
$1+1=1\cdot 1=1$.

Let $S$ be a semiring. For $x \in S$, we define the \emph{left and right annihilators in $S$} by $\al(x)=\{y \in S\colon yx=0\}$ and $\ar(x)=\{y \in S\colon xy=0\}$.  If $S$ is commutative, we simply write $\an(x)$ for $\al(x)=\ar(x)$. We denote by $\Z(S)$ the set of \emph{zero-divisors} of $S$, that is $\Z(S)=\{x \in S\colon \exists y \in S\setminus\{0\} \text { such that } xy=0 \text { or } yx=0 \}$.  The \emph{zero-divisor digraph} $\Gamma(S)$ of $S$ is the digraph with vertex-set $S$ and $u\rightarrow v$ if and only if $uv = 0$.

It is easy to see that if $n\geq 1$ and $S$ is a semiring, then the set $\M_n(S)$ of $n\times n$ matrices forms a semiring with respect to matrix addition and multiplication.  If $S$ is antinegative, then so is $\M_n(S)$. If $S$ has an identity $1$, let $E_{ij} \in \M_n(S)$ with entry $1$ in position $(i,j)$, and $0$ elsewhere.

\section{The automorphisms of the zero-divisor digraph}

The following fact will be used repeatedly.
\begin{lemma}
\label{annihilators}
Let $S$ be a semiring. If $A, B\in S$ and  $\sigma \in \Aut(\Gamma(S))$, then
$$\sigma(\al(A))=\al(\sigma(A)) \quad\textrm{ and }\quad \sigma(\ar(A))=\ar(\sigma(A)).$$
\end{lemma}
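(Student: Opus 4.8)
The plan is to unwind the definitions and observe that the two annihilator sets are nothing other than the in- and out-neighbourhoods of the vertex $A$ in the digraph $\Gamma(S)$. Indeed, by definition of $\Gamma(S)$ we have $Y\rightarrow A$ precisely when $YA=0$, so $\al(A)=\{Y\in S\colon Y\rightarrow A\}=\N^-(A)$; dually, $A\rightarrow Y$ precisely when $AY=0$, so $\ar(A)=\{Y\in S\colon A\rightarrow Y\}=\N^+(A)$. Thus the lemma is really the statement that a digraph automorphism carries the in-neighbourhood of a vertex onto the in-neighbourhood of its image, and likewise for out-neighbourhoods.

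Next I would verify this purely graph-theoretic fact. Let $\sigma\in\Aut(\Gamma(S))$ and $v\in\V(\Gamma(S))$. Since $\sigma$ preserves the relation $\rightarrow$ and is a bijection, we have $u\rightarrow v$ if and only if $\sigma(u)\rightarrow\sigma(v)$; equivalently, $u\in\N^-(v)$ if and only if $\sigma(u)\in\N^-(\sigma(v))$. This gives $\sigma(\N^-(v))=\N^-(\sigma(v))$, and the same argument applied to $\rightarrow$ read in the other direction gives $\sigma(\N^+(v))=\N^+(\sigma(v))$. Applying this with $v=A$ and combining with the identifications above yields $\sigma(\al(A))=\sigma(\N^-(A))=\N^-(\sigma(A))=\al(\sigma(A))$ and $\sigma(\ar(A))=\sigma(\N^+(A))=\N^+(\sigma(A))=\ar(\sigma(A))$, as required.

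There is no real obstacle here: the content is entirely definitional, and the only thing to be careful about is keeping the directions straight (left annihilators correspond to in-neighbours, right annihilators to out-neighbours) so that the asymmetry of the digraph is respected. The one-line takeaway, which can be stated in the proof for later reference, is that $\al(A)=\N^-(A)$ and $\ar(A)=\N^+(A)$ in $\Gamma(S)$.
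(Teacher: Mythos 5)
Your proof is correct and is essentially the same argument as the paper's: the paper writes out the chain of equivalences $X\in\sigma(\al(A))\Leftrightarrow\sigma^{-1}(X)A=0\Leftrightarrow X\sigma(A)=0\Leftrightarrow X\in\al(\sigma(A))$, which is exactly your observation that $\al(A)=\N^-(A)$, $\ar(A)=\N^+(A)$, and automorphisms carry in/out-neighbourhoods to in/out-neighbourhoods of the image. The identification of annihilators with neighbourhoods is a nice way to package it, but the content is identical.
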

\begin{proof}
We have
\begin{align*}
X \in \sigma(\al(A)) &\Longleftrightarrow \sigma^{-1}(X)\in \al(A)\\
&\Longleftrightarrow \sigma^{-1}(X)A=0\\
&\Longleftrightarrow X\sigma(A)=0\\
&\Longleftrightarrow X \in \al(\sigma(A)).
\end{align*}
The proof of the second part is analogous.
\end{proof}

\begin{lemma}
\label{additive}
Let $S$ be an antiring and let $\Gamma=\Gamma(S)$. If $A,B\in S$ and $\sigma \in \Aut(\Gamma)$, then $\sigma(A+B)$ and $\sigma(A)+\sigma(B)$ are twin vertices and, in particular, $\overline{\sigma(A+B)}=\overline{\sigma(A)+\sigma(B)}$.
\end{lemma}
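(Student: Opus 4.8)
The plan is to show that $\sigma(A+B)$ and $\sigma(A)+\sigma(B)$ have the same in- and out-neighbourhoods, and the natural route is to transport the problem back to $S$ via $\sigma^{-1}$ using Lemma~\ref{annihilators}. So the crux reduces to the following claim about the semiring itself: for any $A,B\in S$, the vertex $A+B$ is twin with... well, there is no element playing the role of $\sigma(A)+\sigma(B)$ yet, so instead I would argue directly on neighbourhoods. By Lemma~\ref{annihilators}, $\N^+(\sigma(A+B))=\sigma(\ar(A+B))$ and, since $\sigma$ is a permutation, $\N^+(\sigma(A)+\sigma(B))=\ar(\sigma(A)+\sigma(B))$. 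So it suffices to prove the two set equalities
\begin{equation*}
\ar(\sigma(A)+\sigma(B))=\sigma(\ar(A+B)) \quad\text{and}\quad \al(\sigma(A)+\sigma(B))=\sigma(\al(A+B)).
\end{equation*}
I will do the first; the second is symmetric.

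The key algebraic observation, valid in any antiring, is that $\ar(A+B)=\ar(A)\cap\ar(B)$: indeed $(A+B)X = AX+BX$, and by antinegativity this sum is $0$ if and only if both $AX=0$ and $BX=0$. (This is the only place antinegativity enters, and it is the heart of the argument.) Applying this with $\sigma(A),\sigma(B)$ in place of $A,B$, we get $\ar(\sigma(A)+\sigma(B)) = \ar(\sigma(A))\cap\ar(\sigma(B))$. Now Lemma~\ref{annihilators} gives $\ar(\sigma(A))=\sigma(\ar(A))$ and $\ar(\sigma(B))=\sigma(\ar(B))$, and since $\sigma$ is a bijection it commutes with intersection, so
\begin{equation*}
\ar(\sigma(A)+\sigma(B)) = \sigma(\ar(A))\cap\sigma(\ar(B)) = \sigma\bigl(\ar(A)\cap\ar(B)\bigr) = \sigma\bigl(\ar(A+B)\bigr),
\end{equation*}
using the antinegativity identity once more in the last step. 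The same computation with $\al$ in place of $\ar$ (now using $\al(A+B)=\al(A)\cap\al(B)$, again by antinegativity) yields $\al(\sigma(A)+\sigma(B))=\sigma(\al(A+B))$.

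Finally, translating back through $\sigma$: we have shown $\N^+(\sigma(A)+\sigma(B)) = \ar(\sigma(A)+\sigma(B)) = \sigma(\ar(A+B)) = \N^+(\sigma(A+B))$ by Lemma~\ref{annihilators}, and likewise $\N^-(\sigma(A)+\sigma(B)) = \N^-(\sigma(A+B))$. Hence $\sigma(A+B)$ and $\sigma(A)+\sigma(B)$ are twin vertices, so $\overline{\sigma(A+B)} = \overline{\sigma(A)+\sigma(B)}$ in $\overline{\Gamma}$. I do not anticipate a serious obstacle here: the only substantive ingredient is the antinegativity identity $\ar(A+B)=\ar(A)\cap\ar(B)$ (and its left analogue), and everything else is formal manipulation of Lemma~\ref{annihilators} together with the fact that a bijection preserves intersections; one should just be careful to state the neighbourhood-to-annihilator dictionary cleanly and to note explicitly where antinegativity is used.
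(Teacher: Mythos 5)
Your proof is correct and follows essentially the same route as the paper's: both arguments hinge on the observation that antinegativity forces $\ar(A+B)=\ar(A)\cap\ar(B)$ (and its left analogue), transported through $\sigma$ via Lemma~\ref{annihilators}. The paper simply writes this as a single element-wise chain of equivalences rather than factoring out the intersection identity, but the content is identical.
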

\begin{proof}
Using antinegativity, we have
\begin{align*}
X \in \al(\sigma(A+B)) &\Longleftrightarrow X\sigma(A+B)=0\\
&\Longleftrightarrow \sigma^{-1}(X)(A+B)=0\\
&\Longleftrightarrow \sigma^{-1}(X)A=\sigma^{-1}(X)B=0\\
&\Longleftrightarrow X\sigma(A)=X\sigma(B)=0\\
&\Longleftrightarrow X(\sigma(A)+\sigma(B))=0\\
&\Longleftrightarrow X \in \al(\sigma(A)+\sigma(B)).
\end{align*}
We have proved that $\al(\sigma(A+B)) = \al(\sigma(A)+\sigma(B))$. An analogous proof yields $\ar(\sigma(A+B)) = \ar(\sigma(A)+\sigma(B))$.
This implies that $\sigma(A+B)$ and $\sigma(A)+\sigma(B)$ are twin vertices.
\end{proof}

\begin{de}
Let $S$ be a commutative semiring, let $n\in\NN$ and let $A \in \M_n(S)$ with $(i,j)$ entry $a_{ij}$. For every $i,j \in\{1,\ldots,n\}$, we define $C_i(A)=\bigcap_{k=1}^n{\an(a_{ki})}$ and $R_j(A)=\bigcap_{k=1}^n{\an(a_{jk})}$. Let  $\anr(A):=(C_1(A),\ldots,C_n(A))  \in S^{n}$ and  $\anl(A):=(R_1(A),\ldots,R_n(A)) \in S^{n}$.
\end{de}

The next theorem characterizes the twin vertices of $\Gamma(\M_n(S))$.

\begin{theorem}
\label{twin}
Let $S$ be a commutative antiring, let $n\in\NN$ and let $A,B\in \M_n(S)$. Then $A$ and $B$ are twin vertices of $\Gamma(\M_n(S))$ if and only if $\anl(A)=\anl(B)$ and $\anr(A)=\anr(B)$.
\end{theorem}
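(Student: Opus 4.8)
The plan is to translate the twin condition into an explicit description of the in- and out-neighbourhoods in $\Gamma(\M_n(S))$, and then exploit antinegativity to reduce everything to conditions on individual matrix entries.

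First I would note that, by the definition of the zero-divisor digraph, $\N^-(A)=\al(A)$ and $\N^+(A)=\ar(A)$ for every $A\in\M_n(S)$, where $\al(A)$ and $\ar(A)$ now denote the left and right annihilators of $A$ inside $\M_n(S)$. Hence $A$ and $B$ are twins if and only if $\al(A)=\al(B)$ and $\ar(A)=\ar(B)$, and it suffices to prove the two equivalences $\al(A)=\al(B)\iff\anl(A)=\anl(B)$ and, symmetrically, $\ar(A)=\ar(B)\iff\anr(A)=\anr(B)$.

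Next I would compute $\al(A)$ entrywise. Writing $X=(x_{ij})$ and $A=(a_{ij})$, the $(i,j)$ entry of $XA$ is $\sum_{k=1}^n x_{ik}a_{kj}$. Since $S$ is antinegative, a finite sum of elements of $S$ vanishes if and only if every summand vanishes; therefore $XA=0$ if and only if $x_{ik}a_{kj}=0$ for all $i,j,k$, equivalently $x_{ik}\in\bigcap_{j=1}^n\an(a_{kj})=R_k(A)$ for all $i,k$. In words, $X\in\al(A)$ precisely when, for every $k$, all entries of the $k$-th column of $X$ lie in $R_k(A)$. The analogous computation, with the summation index in the first position, shows that $X\in\ar(A)$ precisely when, for every $k$, all entries of the $k$-th row of $X$ lie in $C_k(A)$.

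Finally I would deduce the equivalences. If $\anl(A)=\anl(B)$, i.e.\ $R_k(A)=R_k(B)$ for all $k$, then the entrywise description gives $\al(A)=\al(B)$ at once. For the converse, if $R_k(A)\neq R_k(B)$ for some $k$, choose $s$ belonging to exactly one of them, say $s\in R_k(A)\setminus R_k(B)$, and let $X$ be the matrix with $s$ in position $(1,k)$ and $0$ elsewhere (no identity element of $S$ is needed for this); then $X\in\al(A)\setminus\al(B)$, so $\al(A)\neq\al(B)$. This proves $\al(A)=\al(B)\iff\anl(A)=\anl(B)$, and the row version gives $\ar(A)=\ar(B)\iff\anr(A)=\anr(B)$; combining these with the first step yields the theorem. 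The only steps needing any care are the clean use of antinegativity to pass from ``the sum is zero'' to ``each term is zero'' and the single-entry witness matrix in the converse direction; the rest is bookkeeping.
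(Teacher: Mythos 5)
Your proposal is correct and follows essentially the same route as the paper: antinegativity reduces $XA=0$ (resp.\ $AX=0$) to the entrywise conditions $x_{ik}\in R_k(A)$ (resp.\ $x_{kj}\in C_k(A)$), and a single-entry witness matrix separates the annihilators when some $R_k$ or $C_k$ differs. Your packaging---first giving the full entrywise description of $\al(A)$ and $\ar(A)$ and then reading off both implications---is a tidy reorganization of the same argument, and your observation that the witness matrix needs no identity element is a valid (minor) refinement.
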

\begin{proof}
Let $a_{ij}$ and $b_{ij}$ be the $(i,j)$ entry of $A$ and $B$, respectively. Suppose first that $A$ and $B$ are twin vertices of $\Gamma(\M_n(S))$ and assume that $\anr(A) \neq \anr(B)$. This implies that, for some $i \in\{1,\ldots,n\}$, we have $C_i(A) \neq C_i(B)$. Swapping the role of $A$ and $B$ if necessary, there exists $s \in S$ such that $s \in C_i(A)$ and $s \notin C_i(B)$. Therefore, there exists $k \in\{1,\ldots,n\}$ such that $s \notin \an(b_{ki})$. Now, let $C=sE_{ik} \in \M_n(S)$ and observe that $AC=0$ but $BC \neq 0$, so $\N^+(A) \neq \N^+(B)$, which is a contradiction with the fact that $A$ and $B$ are twin vertices. We have thus proved that $\anr(A)=\anr(B)$. A similar argument yields that $\anl(A)=\anl(B)$.

Conversely, assume now that $\anl(A)=\anl(B)$ and $\anr(A)=\anr(B)$. Suppose there exists $X \in \M_n(S)$ such that $AX=0$. Therefore, for all $i,j \in\{1,\ldots,n\}$ we have $\sum_{k=1}^n{a_{ik} x_{kj}}=0$. Since $S$ is an antiring, this further implies that $a_{ik} x_{kj}=0$ for all $i, j, k\in\{1,\ldots,n\}$. So, $x_{kj} \in \an(a_{ik})$ and therefore $x_{kj} \in C_k(A)=C_k(B)$ for all $k \in\{1,\ldots,n\}$. Thus, for all  $i, j, k \in\{1,\ldots,n\}$, we have $x_{kj} \in \an(b_{ik})$. This yields
$b_{ik} x_{kj} = 0$ for all $i, j, k\in\{1,\ldots,n\}$, so $BX=0$. Thus, we have proved that $\N^+(A) \subseteq \N^+(B)$. By swapping the roles of $A$ in $B$ we also get 
$\N^+(B) \subseteq \N^+(A)$, so $\N^+(A) = \N^+(B)$. A similar argument yields that $\N^-(A)=\N^-(B)$, thus $A$ and $B$ are twin vertices.
\end{proof}

%
%
%
%
%

\begin{de}
Let $S$ be a commutative semiring and let $\alpha\in S\setminus \Z(S)$. We say that $\alpha = e_1+ e_2+\cdots+e_s$ such that $e_i \neq 0$ for all $i$ and $e_ie_j=0$ for all $i \neq j$ is \emph{a decomposition of $\alpha$ of length $s$}. The \emph{length} $\ell(\alpha)$ of $\alpha$ is the supremum of the length of a decomposition of $\alpha$. We say that $\alpha$ is of \emph{maximal length} if $\ell(\alpha) \geq \ell(\beta)$ for all $\beta \in S \setminus \Z(S)$.

A semiring is \emph{decomposable} if it contains an element of length at least $2$, otherwise it is \emph{indecomposable}.
\end{de}

The next lemma shows that in the case $S$ is decomposable, we can study the automorphisms of the zero-divisor digraph of the matrix ring componentwise.

\begin{lemma}
\label{componentwise}
Let $S$ be a commutative antiring with identity
and let $\alpha \in S \setminus \Z(S)$ be of maximal length $s$ with decomposition $\alpha = e_1+ e_2+\cdots+e_s$. Let $n \in \NN$ and $\sigma \in \Aut(\Gamma(\M_n(S)))$. Then  there exists  $\omega\in\Sym(s)$ such that, for every $r\in\{1,\ldots,s\}$, we have $\sigma(e_r \M_n(S)) = e_{\omega(r)} \M_n(S)$.
\end{lemma}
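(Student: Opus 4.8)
The plan is to describe each set $e_r\M_n(S)$ purely in terms of the digraph $\Gamma(\M_n(S))$, so that $\sigma$ is forced to carry it to another set of the same shape.

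I would begin with a reduction that uses the maximality of $\alpha$ (and the presence of an identity in $S$): one checks that $e_1,\dots,e_s$ may be taken to be pairwise orthogonal idempotents with $\an\!\bigl(\sum_{t\neq r}e_t\bigr)=e_rS$ for every $r$, and with each $e_rS$ indecomposable (a finer decomposition of $e_rS$ would, pieced into the decomposition of $\alpha$, contradict maximality). This replacement alters neither $e_rS$ nor $e_r\M_n(S)$, so assume it from now on. Put $\delta_r=\sum_{t\neq r}e_t$, again an idempotent; since it is central in $S$ we have $(\delta_r I)X=X(\delta_r I)=\delta_r X$ for all $X\in\M_n(S)$, where $I$ is the identity matrix, and this vanishes precisely when every entry of $X$ lies in $\an(\delta_r)=e_rS$. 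Hence
\[
  e_r\M_n(S)=\{X\in\M_n(S):(\delta_r I)X=0\}=\ar(\delta_r I)=\al(\delta_r I),
\]
so $e_r\M_n(S)$ is simultaneously a left and a right annihilator inside $\M_n(S)$.

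Next, apply Lemma~\ref{annihilators} in $\Gamma(\M_n(S))$: it gives $\sigma(e_r\M_n(S))=\sigma(\ar(\delta_r I))=\ar(\sigma(\delta_r I))$ and, likewise, $\sigma(e_r\M_n(S))=\al(\sigma(\delta_r I))$. Thus $\sigma(e_r\M_n(S))$ is at once a right annihilator — hence closed under addition and under right multiplication by $\M_n(S)$ — and a left annihilator — hence closed under left multiplication; that is, it is a two-sided ideal of $\M_n(S)$. As $S$ is commutative with identity, sandwiching an entry into an arbitrary position with the matrix units $E_{ij}$ shows that every two-sided ideal of $\M_n(S)$ has the form $\M_n(J)$ for an ideal $J$ of $S$; so $\sigma(e_r\M_n(S))=\M_n(J_r)$ for ideals $J_1,\dots,J_s$ of $S$, each nonzero since $e_rI\neq 0$. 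Moreover, for $r\neq t$ there is an arc from every vertex of $e_r\M_n(S)$ to every vertex of $e_t\M_n(S)$ (because $e_re_t=0$, so $XY=0$ for all such $X,Y$), and $\sigma$ preserves arcs, so the same holds between $\M_n(J_r)$ and $\M_n(J_t)$; testing on matrix units yields $J_rJ_t=0$ for $r\neq t$.

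Finally I would identify the $J_r$ with the $e_rS$. Restricting $\sigma$ gives a digraph isomorphism from $e_r\M_n(S)=\M_n(e_rS)$ onto $\M_n(J_r)$, and since the matrix product of two elements of $\M_n(e_rS)$ is the same whether formed in $\M_n(S)$ or in $\M_n(e_rS)$ (likewise for $\M_n(J_r)$), this is an isomorphism $\Gamma(\M_n(e_rS))\cong\Gamma(\M_n(J_r))$; in particular $|J_r|=|e_rS|$. Combining this with the relations $J_rJ_t=0$ and with the fact that, by the reduction, the ideals $e_1S,\dots,e_sS$ are exactly the indecomposable summands of $S$ singled out by the maximal-length decomposition — a configuration unique up to permutation — one concludes that $\{J_1,\dots,J_s\}=\{e_1S,\dots,e_sS\}$ as sets. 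Since the $e_rS$ are pairwise distinct, this produces $\omega\in\Sym(s)$ with $J_r=e_{\omega(r)}S$, whence $\sigma(e_r\M_n(S))=\M_n(e_{\omega(r)}S)=e_{\omega(r)}\M_n(S)$, as required. The two steps I expect to demand real work are this last identification — ruling out that $\sigma$ could splinter or amalgamate the blocks $e_rS$, where finiteness and a careful treatment of indecomposability enter — and the opening reduction, which is where the maximality of $\alpha$ is genuinely exploited; the middle step, that $\sigma(e_r\M_n(S))$ is again a two-sided ideal of the right form satisfying the orthogonality relations, follows fairly directly from Lemma~\ref{annihilators}.
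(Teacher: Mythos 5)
Your overall strategy --- realizing $e_r\M_n(S)$ as a set definable from the digraph via Lemma~\ref{annihilators} (as a simultaneous left and right annihilator, hence a two-sided ideal $\M_n(J_r)$) and then transporting it by $\sigma$ --- is genuinely different from the paper's proof, which instead tracks the individual vertices $e_rE_{ij}$, uses Lemma~\ref{additive} to split $\sigma^{-1}(e_kB)$ into mutually orthogonal pieces $f_kE_{ij}$, and invokes the maximality of $\ell(\alpha)$ to show that exactly one piece survives. However, your version has two genuine gaps, and they are precisely the two steps you defer as ``real work.''

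First, the opening reduction is not available. Nothing in the hypotheses lets you replace the $e_i$ by pairwise orthogonal idempotents without changing $e_iS$, and the identity $\an\bigl(\sum_{t\neq r}e_t\bigr)=e_rS$ fails in general: for $S=\NN$ with $\alpha=e_1=2$ (so $s=1$) one gets $\an(0)=S\neq 2\NN$, and for $S=\NN\times\NN$ with $e_1=(2,0)$, $e_2=(0,3)$ one gets $\an(e_2)=\NN\times\{0\}\supsetneq e_1S$. In general only the inclusion $e_r\M_n(S)\subseteq\ar(\delta_rI)=\M_n\bigl(\bigcap_{t\neq r}\an(e_t)\bigr)$ holds, so the identity $e_r\M_n(S)=\ar(\delta_rI)=\al(\delta_rI)$, on which everything afterwards rests, is unproved (and false as stated). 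Second, the closing identification $\{J_1,\dots,J_s\}=\{e_1S,\dots,e_sS\}$ is only asserted: knowing that the $J_r$ are nonzero two-sided ideals, pairwise orthogonal, and in digraph-bijection with the $e_rS$ does not pin them down --- a priori $J_r$ could be a quite different ideal annihilating the others --- and the cardinality argument says nothing when $S$ is infinite (the paper only assumes finitely many zero-divisors). This is exactly the point where the maximality of $s$ must be used (an extra orthogonal piece would lengthen a decomposition of a non-zero-divisor, as in the paper's argument with the $f_k$ and in Lemma~\ref{max}); without that input the proposal does not yet constitute a proof.
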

\begin{proof}
Let  $r \in\{1,\ldots,s\}$, let $i,j\in\{1,\ldots,n\}$ and let $B=\sigma(e_rE_{ij})$. So, $\overline{\sigma(e_r E_{ij})}$ 
$=\overline{\sum_{k=1}^s e_{k}B}$. 
By Lemma~\ref{additive}, we have $\overline{e_rE_{ij}} = \overline{\sum_{k=1}^s\sigma^{-1}(e_kB)}$. Since $S$ is antinegative, for every $k \in\{1,\ldots,s\}$, there exists $f_{k} \in S$ such that $\sigma^{-1}(e_kB)=f_{k}E_{ij}$ and $\sum_{k=1}^sf_{k}=e_rz$ for some $z \in S\setminus\Z(S)$.

Let $k,k'\in\{1,\ldots,s\}$. We have $\al(f_{k}E_{ij}), \al(f_{k'}E_{ij}) \subseteq \al(f_{k}f_{k'}E_{ij})$ hence $\al(e_{k}B), \al(e_{k'}B) \subseteq \al(\sigma(f_{k}f_{k'}E_{ij}))$. If $k \neq k'$, then $ \M_n(\alpha S)\subseteq \al(e_{k}B) + \al(e_{k'}B)$, which is possible only if  $f_{k}f_{k'}=0$. We have shown that  $f_{k}f_{k'}=0$ for every $k,k'\in\{1,\ldots,s\}$ with $k\neq k'$.

Since $\alpha$ is not a zero-divisor, we have $\overline{\Gamma(S)}=\overline{\Gamma(\alpha S)}$  and, by Theorem~\ref{twin}, also $\overline{\Gamma(\M_n(S))}=\overline{\Gamma(\M_n(\alpha S))}$. For $t\in\{1,\ldots,s\}$, $t \neq r$, we have $\sum_{k=1}^sf_{k}e_t=e_re_tz=0$. By antinegativity, this implies  that   $f_ke_t=0$ for every $k\in\{1,\ldots,s\}$. It follows that 
$$\alpha z=\sum_{i\neq r} e_i z+\sum_{k=1}^s f_k$$
is a decomposition of $\alpha z$. Since  $z\notin\Z(S)$, $e_iz\neq 0$ and, since $\ell(z \alpha) \leq \ell (\alpha)=s$, it follows that all but exactly one of the $f_k$'s are $0$. This implies that all but one of the $e_kB$'s  are $0$ and there exists $k\in\{1, \ldots, s \}$ such that  $\overline{\sigma(e_r E_{ij})} = \overline{e_k B}$. This shows the existence of a permutation $\omega\in\Sym(s)$ such that $\overline{\sigma(e_r E_{ij})} = \overline{e_{\omega(r)} B}$.

Let $t\in\{1,\ldots,n\}$. Since $\alpha\notin\Z(S)$, we have $\alpha e_r \neq 0$ thus $e_r^2 \neq 0$,  $e_r E_{ij} e_r E_{j t} \neq 0$ and  $\overline{(e_{\omega(r)}B) \sigma(e_r E_{jt})}\neq 0$. Since $\overline{\Gamma(\M_n(S))}=\overline{\Gamma(\M_n(\alpha S))}$, this implies $\overline{\sigma(e_r E_{jt})} \in \overline{e_{\omega(r)} \M_n(S)}$. 

As this holds for all $j,t\in\{1,\ldots,n\}$, we have $\overline{\sigma(e_r \M_n(S))} \subseteq\overline{e_{\omega(r)} \M_n(S)}$.  However, a twin vertex to a vertex from from $e_r \M_n(S)$ is itself in $e_r \M_n(S)$, therefore also $\sigma(e_r \M_n(S)) \subseteq e_{\omega(r)} \M_n(S)$. Since  $\sigma$ is a bijection, $\sigma(e_r \M_n(S)) = e_{\omega(r)} \M_n(S)$. 

%
%
%
\end{proof}

\begin{lemma}
\label{max}
Let $S$ be a commutative antiring and let $\alpha \in S \setminus \Z(S)$ be of maximal length $s$ with  decomposition $\alpha = e_1+ e_2+\cdots+e_s$. Then, for every $i\in\{1,\ldots,s\}$, the subsemiring $e_iS$ is indecomposable.
\end{lemma}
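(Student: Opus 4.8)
The plan is to argue by contradiction: suppose some $e_iS$ is decomposable, so that there is an element $\beta\in e_iS$ of length at least $2$ inside $e_iS$, say $\beta=g_1+g_2+\cdots+g_t$ with $t\ge 2$, the $g_k$ nonzero, $g_kg_{k'}=0$ for $k\ne k'$, and all $g_k\in e_iS$. The first step is to observe that because $e_i$ acts as an identity modulo annihilators on $e_iS$ (elements of $e_iS$ are already "localised" at $e_i$), each $g_k$ satisfies $e_ig_k=g_k$, and more importantly $g_ke_j=0$ for every $j\ne i$ (since $g_k\in e_iS$ and $e_ie_j=0$). Thus the $g_k$ are orthogonal not only to each other but also to all the $e_j$ with $j\ne i$.

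The key step is then to splice this finer decomposition of $e_i$ into the given maximal decomposition of $\alpha$. Concretely, I would consider the element $\alpha' = \sum_{j\ne i} e_j + g_1 + g_2 + \cdots + g_t$ and check that it is a decomposition: the summands are all nonzero, and pairwise products vanish — $e_je_{j'}=0$ for $j\ne j'$ by hypothesis, $g_kg_{k'}=0$ for $k\ne k'$ by the choice of $\beta$, and $e_jg_k=0$ for $j\ne i$ by the previous paragraph. Hence $\alpha'$ has a decomposition of length $(s-1)+t \ge s+1$. It remains to see that $\alpha'\notin\Z(S)$; this follows because $\alpha' = \sum_{j\ne i}e_j + \beta$ and $\beta\in e_iS$ shares annihilator behaviour with $e_i$ (anything killing $\beta$ and all $e_j$, $j\ne i$, kills $\alpha$, using antinegativity as in Lemma~\ref{additive}), so $\an(\alpha')\subseteq\an(\alpha)=\{0\}$. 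This contradicts the maximality of $\ell(\alpha)=s$.

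The main obstacle I anticipate is making precise the claim that a decomposition of $\beta$ inside the subsemiring $e_iS$ genuinely "refines" the decomposition of $\alpha$ in $S$ — in particular verifying that the mixed products $e_jg_k$ vanish for $j\ne i$, and that $\alpha'$ stays out of $\Z(S)$. Both reduce to the single observation that $g_k\in e_iS$ forces $g_k$ to behave multiplicatively like a multiple of $e_i$; once that is nailed down, the orthogonality relations and the annihilator computation are routine, and antinegativity delivers the "all-but-one summand" bookkeeping exactly as in the proof of Lemma~\ref{componentwise}. A minor point to be careful about is the definition of length: $\ell$ is a supremum, so I only need to exhibit one decomposition of $\alpha'$ of length exceeding $s$ to reach the contradiction, which the construction above provides directly.
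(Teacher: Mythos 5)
Your proof is correct and takes essentially the same route as the paper: splice a decomposition of a non-zero-divisor of $e_iS$ into the given decomposition of $\alpha$ to obtain a longer decomposition of an element that is still not a zero-divisor of $S$ (via antinegativity and the fact that $\beta y=0$ forces $e_iy=0$), contradicting maximality. One small aside: your claim $e_ig_k=g_k$ need not hold since $e_i$ is not assumed idempotent, but you never actually use it --- only $e_jg_k=0$ for $j\ne i$ and commutativity are needed, and those you justify correctly.
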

\begin{proof}
Suppose that $e_iS$ is decomposable for some $i\in\{1,\ldots,s\}$, say $i=1$ without loss of generality. By definition, there exists $e_1w \in e_1S \setminus \Z(e_1S)$ such that $e_1w = f_1 + f_2$, where $f_1, f_2 \in e_1S\setminus\{0\}$ and $f_1f_2=0$. For all $j \neq 1$, we have $e_je_1w=0$ and thus $e_jf_1 = e_jf_2=0$ by antinegativity. Let  $\beta=e_1w + e_2+\cdots+e_s$. 

Suppose that $\beta x=0$ for some $x \in S$. By antinegativity, we have $(e_1w)(e_1x) = 0$ and $e_2x=\cdots=e_sx=0$. Since $e_1w$ is not a zero-divisor in $e_1S$ this implies that $e_1x=0$ and therefore also $\alpha x = 0$.  However, $\alpha$ is not a zero-divisor, so we can conclude that $x=0$.

This shows that $\beta=f_1+f_2 + e_2+\cdots+e_s$ is not a zero-divisor in $S$, which is a contradiction with the maximal length of $\alpha$.
\end{proof}

We first focus on the automorphisms restricted to the matrices over indecomposable subsemirings.

\begin{proposition}
\label{elementary}
Let $S$ be a commutative antiring with identity 
and let $\alpha \in S \setminus \Z(S)$ be of maximal length $s$ with  decomposition $\alpha = e_1+ e_2+\cdots+e_s$. Let $u, v \in \{1, \ldots, s \}$, $S_1=e_uS$ and $S_2=e_vS$. Let $n \in \NN$ and $\sigma \in \Aut(\Gamma(\M_n(S)))$ such that $\sigma(\M_n(S_1))=\M_n(S_2)$.  If $i,j\in \{1, \ldots, n \}$, then there exist $y\in S_2\setminus\Z(S_2)$ and $k,\ell\in \{1, \ldots, n \}$ such that $\sigma(e_uE_{ij}) = yE_{k\ell}$. 
\end{proposition}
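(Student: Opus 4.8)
The plan is to show that $\sigma(e_uE_{ij})$ is, up to twin equivalence, a scalar multiple of a single matrix unit, and then upgrade this to an honest equality. Write $B=\sigma(e_uE_{ij})$. Since $\sigma(\M_n(S_1))=\M_n(S_2)$ and $e_uE_{ij}\in\M_n(S_1)$, we have $B\in\M_n(S_2)$, so every entry of $B$ lies in $S_2=e_vS$. The first step is to pin down the ``support'' of $B$. I would look at left and right annihilators: $e_uE_{ij}$ has a very large annihilator in $\M_n(S_1)$ (everything with zero $j$-th row, say, on the right, and everything with zero $i$-th column on the left), and Lemma~\ref{annihilators} transfers this to $B$. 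Concretely, for any $C\in\M_n(S_1)$ with $(e_uE_{ij})C=0$ we get $B\,\sigma(C)=0$, and as $C$ ranges over a spanning-type family of such matrices, using antinegativity (so that a matrix product vanishes iff all the relevant entrywise products vanish, exactly as in the proof of Theorem~\ref{twin}) one forces each column $R_p(B)$ for $p\neq$ (the relevant index) to annihilate a non-zero-divisor of $S_2$, hence to be zero. This is the step I expect to be the main obstacle: one has to be careful that the annihilator information available inside $\M_n(S_1)$, after applying $\sigma$, genuinely constrains $B$ in $\M_n(S_2)$ and not merely in $\M_n(S)$, and that the non-zero-divisor one produces really is a non-zero-divisor of the subsemiring $S_2$ and not just of $S$ --- this is where Lemma~\ref{max}, the indecomposability of $e_vS$, is essential.

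Once the support is controlled, the argument runs as follows. By the annihilator analysis above, $B$ has at most one non-zero row and at most one non-zero column; say this is row $k$ and column $\ell$, so $B=yE_{k\ell}$ for some $y\in S_2$. It remains to see $B\neq0$ and $y\notin\Z(S_2)$. Non-triviality: if $B=0$ then $\sigma$ sends a vertex with a non-trivial zero-divisor neighbourhood (namely $e_uE_{ij}$, which is annihilated by many non-zero matrices but also annihilates many, and crucially $e_uE_{ij}\cdot e_uE_{jt}\neq0$ since $e_u^2\neq0$ as $\alpha\notin\Z(S)$) to the zero matrix, whose neighbourhood behaviour is incompatible; more cleanly, $\sigma$ is a bijection fixing $0$ (since $0$ is the unique vertex adjacent to everything, $\al(0)=\ar(0)=S$), so $B\neq0$. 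For $y\notin\Z(S_2)$: since $(e_uE_{ij})(e_uE_{jt})\neq0$ for every $t$, applying $\sigma$ gives $(yE_{k\ell})\sigma(e_uE_{jt})\neq0$, and by the same support analysis $\sigma(e_uE_{jt})=y_tE_{\ell m_t}$ with $y_t\in S_2$; the non-vanishing of $yE_{k\ell}\cdot y_tE_{\ell m_t}=(yy_t)E_{km_t}$ forces $yy_t\neq0$. Taking $t=j$, and noting $y_j\in S_2$, we get that $y$ is not a left zero-divisor in $S_2$; a symmetric argument using $e_uE_{pi}\cdot e_uE_{ij}\neq0$ on the left shows $y$ is not a right zero-divisor, so $y\in S_2\setminus\Z(S_2)$ as $S_2$ is commutative.

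A cleaner packaging of the support step, which I would actually use: apply Lemma~\ref{additive} together with the idempotent-like relations $e_uE_{ij}=\sum_{p} (e_uE_{ip})(e_uE_{pj})$-type decompositions, or more directly mimic the internal argument of Lemma~\ref{componentwise}: there, one shows that images of $e_rE_{ij}$ are, up to twins, of the form $f E_{ij}$ with $f$ supported ``along one component''; here the same bookkeeping with row-index $i$ replaced by an arbitrary matrix unit shows $\sigma(e_uE_{ij})$ is twin-equivalent to $yE_{k\ell}$. Then I invoke Theorem~\ref{twin} to see that a vertex twin-equivalent to $yE_{k\ell}$ has the same $\anl$ and $\anr$; since $yE_{k\ell}$ already has a single non-zero row and column, and $\anl,\anr$ record exactly the (annihilators of the) rows and columns, any twin of $yE_{k\ell}$ inside $\M_n(S_2)$ must itself be of the form $y'E_{k\ell}$ with $\an(y')=\an(y)$ in $S_2$, and in particular $y'\notin\Z(S_2)$ iff $y\notin\Z(S_2)$. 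Combining with the non-triviality argument completes the proof.
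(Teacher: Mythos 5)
Your proposal correctly identifies the two things that need proving (single nonzero entry, and that entry a non-zero-divisor of $S_2$), but both of your arguments have genuine gaps. For the support step, your first version --- letting $C$ range over $\ar(e_uE_{ij})$ and using $B\,\sigma(C)=0$ --- does not get off the ground: Lemma~\ref{annihilators} tells you $\ar(B)=\sigma(\ar(e_uE_{ij}))$ as a set, but to constrain the entries of $B$ you need to exhibit \emph{specific} matrices (e.g.\ matrix units) inside $\ar(B)$, and you cannot do that without already knowing where $\sigma$ sends specific matrices, which is exactly what is being determined. Your ``cleaner packaging'' points in the right direction but stops at a gesture. The actual argument is: write $\sigma(e_uE_{ij})=\sum_{k,\ell}\beta_{k\ell}E_{k\ell}$, pull back each summand to $A_{k\ell}=\sigma^{-1}(\beta_{k\ell}E_{k\ell})$, use Lemma~\ref{additive} and the fact that twins of $e_uE_{ij}$ are of the form $zE_{ij}$ to get $A_{k\ell}=\alpha_{k\ell}E_{ij}$ with $\sum\alpha_{k\ell}=z\notin\Z(S_1)$, then show the $\alpha_{k\ell}$ are mutually orthogonal (for $k\neq k'$ one has $\M_n(S)=\al(\beta_{k\ell}E_{k\ell})+\al(\beta_{k'\ell'}E_{k'\ell'})\subseteq\al(\sigma(\alpha_{k\ell}\alpha_{k'\ell'}E_{ij}))$, forcing $\alpha_{k\ell}\alpha_{k'\ell'}=0$, and symmetrically with $\ar$ for $\ell\neq\ell'$), and finally invoke the indecomposability of $S_1$ (Lemma~\ref{max}) to conclude at most one $\alpha_{k\ell}$, hence at most one $\beta_{k\ell}$, is nonzero. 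Note that you misplace the role of Lemma~\ref{max}: it is not about locating the non-zero-divisor inside $S_2$ rather than $S$; it is the step that kills all but one summand of a sum of mutually orthogonal elements adding up to a non-zero-divisor of $S_1$.

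The second gap is in your proof that $y\notin\Z(S_2)$. From $(e_uE_{ij})(e_uE_{jt})\neq 0$ you deduce $yy_t\neq 0$ for the particular elements $y_t$ arising as coefficients of $\sigma(e_uE_{jt})$; but a zero-divisor is an element annihilated by \emph{some} nonzero element, so exhibiting one element that $y$ fails to annihilate proves nothing. The paper's argument instead applies the first part to $\sigma^{-1}$ and $e_vE_{k\ell}$ to get $\sigma^{-1}(e_vE_{k\ell})=y'E_{i'j'}$, and observes that $y\in\Z(S_2)$ would give the strict inclusions $\al(e_vE_{k\ell})\subsetneq\al(yE_{k\ell})$ and $\ar(e_vE_{k\ell})\subsetneq\ar(yE_{k\ell})$, which transport under $\sigma^{-1}$ to $\al(y'E_{i'j'})\subsetneq\al(e_uE_{ij})$ and $\ar(y'E_{i'j'})\subsetneq\ar(e_uE_{ij})$; this forces $(i',j')=(i,j)$ and then contradicts the fact that $\al(e_uE_{ij})$ is minimal among left annihilators of matrices $wE_{ij}$ with $w\in S_1$. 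Your non-triviality observation ($\sigma(0)=0$, so $B\neq 0$) and your upgrade from twin-equivalence to genuine equality via Theorem~\ref{twin} are both fine, but the two central steps above need to be supplied.
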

\begin{proof}
Write $\sigma(e_uE_{ij}) = \sum_{k,\ell}\beta_{k\ell}E_{k\ell}$. Let $A_{k\ell}=\sigma^{-1}(\beta_{k\ell}E_{k\ell})$. By Lemma~\ref{additive}, $\sigma(e_uE_{ij})$ and $\sigma\left(\sum_{k,\ell}A_{k\ell}\right)$ are twin vertices, therefore $e_uE_{ij}$ and $\sum_{k,\ell}A_{k\ell}$ are twin vertices as well. Now, twin vertices of $e_uE_{ij}$ must be of the form $zE_{ij}$, so $\sum_{k,\ell}A_{k\ell}=zE_{ij}$ for some $z \in S_1$.
Since $S$ is antinegative, we can conclude that, for all $k,\ell\in \{1, \ldots, n \}$, there exist
$\alpha_{k\ell} \in S_1$ such that $A_{k\ell}=\alpha_{k\ell}E_{ij}$ and $\sum_{k,\ell}\alpha_{k\ell}=z$. 

Let $k,k',\ell,\ell'\in \{1, \ldots, n \}$ with $(k,\ell) \neq (k',\ell')$. Now, we either have $k \neq k'$ or $\ell \neq \ell'$. Suppose first that $k \neq k'$. Since $S$ is commutative, we have $\al(A_{k\ell})=\al(\alpha_{k\ell}E_{ij})\subseteq \al(\alpha_{k\ell}\alpha_{k'\ell'}E_{ij})$. By Lemma~\ref{annihilators}, this implies $\al(\beta_{k\ell}E_{k\ell}) \subseteq \al(\sigma(\alpha_{k\ell}\alpha_{k'\ell'}E_{ij}))$. Similarly, we have $\al(\beta_{k'\ell'}E_{k'\ell'}) \subseteq \al(\sigma(\alpha_{k\ell}\alpha_{k'\ell'}E_{ij}))$. Since $k \neq k'$, $\M_n(S)=\al(\beta_{k\ell}E_{k\ell}) + \al(\beta_{k'\ell'}E_{k'\ell'}) \subseteq \al(\sigma(\alpha_{k\ell}\alpha_{k'\ell'}E_{ij}))$, which implies  $\sigma(\alpha_{k\ell}\alpha_{k'\ell'}E_{ij})=0$ and thus $\alpha_{k\ell}\alpha_{k'\ell'}=0$. If $\ell \neq \ell'$, we arrive at the same conclusion by using right annihilators, namely that distinct $\alpha_{k\ell}$'s annihilate each other. Since $S_1$ is indecomposable by Lemma \ref{max}, the sum $\sum_{k,\ell}\alpha_{k\ell}=z$ has at most one non-zero summand. It follows that there is at most one non-zero $A_{k\ell}$ and at most one non-zero $\beta_{k\ell}E_{k\ell}$. This concludes the proof of the first part, with $y=\beta_{k\ell}$.

It remains to show that $y \notin \Z(S_2)$. Suppose, on the contrary, that $y \in \Z(S_2)$. By the first part of the result, there exist $y'\in S_1$ and $i',j'\in \{1, \ldots, n \}$ such that $\sigma^{-1}(e_vE_{k\ell})=y'E_{i'j'}$. Since $y \in \Z(S_2)$, we have $\al(e_vE_{k\ell}) \subset \al(yE_{k\ell})$ and $\ar(e_vE_{k\ell}) \subset \ar(yE_{k\ell})$. By Lemma~\ref{annihilators}, it follows  that $\al(y'E_{i'j'}) \subset \al(e_uE_{ij})$ and of course also $\ar(y'E_{i'j'}) \subset \ar(e_uE_{ij})$. This is only possible if $i=i'$ and $j=j'$ which implies $\al(y'E_{ij}) \subset \al(e_uE_{ij})$, a contradiction.
\end{proof}

%
%
%

\begin{lemma}
\label{divisors}
Let $S$ be a commutative antiring with identity 
and let  $\alpha \in S \setminus \Z(S)$ be of maximal length $s$ with  decomposition $\alpha = e_1+ e_2+\cdots+e_s$. Let $u, v \in \{1, \ldots, s \}$, $S_1=e_uS$ and $S_2=e_vS$. Let $n \in \NN$ and $\sigma \in \Aut(\Gamma(\M_n(S)))$ such that $\sigma(\M_n(S_1))=\M_n(S_2)$.  If $x \in \Z(S_1)$ and $i,j\in \{1, \ldots, n \}$, then there exist $z\in \Z(S_2)$ and $k,\ell\in \{1, \ldots, n \}$ such that $\sigma(xE_{ij}) = zE_{k\ell}$.
\end{lemma}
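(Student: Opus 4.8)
The plan is to first show that $\sigma(xE_{ij})$ is a scalar multiple of a single matrix unit, and then to locate that matrix unit and prove its scalar is a zero-divisor of $S_2$. (If $x=0$ everything is trivial, so one may as well assume $x\neq 0$.)

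For the first part, the starting point is Proposition~\ref{elementary}, which gives $\sigma(e_uE_{ij})=yE_{k\ell}$ with $y\in S_2\setminus\Z(S_2)$ for some $k,\ell$. Since $x\in S_1=e_uS$, we may write $x=e_us$, and then $xE_{ij}=(e_uE_{ij})(sE_{jj})=(sE_{ii})(e_uE_{ij})$, so $\al(e_uE_{ij})\subseteq\al(xE_{ij})$ and $\ar(e_uE_{ij})\subseteq\ar(xE_{ij})$. Applying $\sigma$ and Lemma~\ref{annihilators}, this becomes $\al(yE_{k\ell})\subseteq\al(\sigma(xE_{ij}))$ and $\ar(yE_{k\ell})\subseteq\ar(\sigma(xE_{ij}))$. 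Now $E_{pr}\in\al(yE_{k\ell})$ for all $p$ and all $r\neq k$, and $E_{pr}\in\ar(yE_{k\ell})$ for all $r$ and all $p\neq\ell$, so every row of $\sigma(xE_{ij})$ other than row $k$ and every column other than column $\ell$ must vanish. Hence $\sigma(xE_{ij})=zE_{k\ell}$ for some $z$, and $z\in S_2$ because $xE_{ij}\in\M_n(S_1)$; this already supplies $z,k,\ell$ as required.

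To see that $z\in\Z(S_2)$, I would argue by contradiction, assuming $z\in S_2\setminus\Z(S_2)$. Writing $z=e_vz'$ gives $\an(e_v)\subseteq\an(z)$; conversely, if $zw=0$ then $z(e_vw)=e_v(zw)=0$ with $e_vw\in S_2$, so $e_vw=0$ since $z\notin\Z(S_2)$, hence $\an(z)\subseteq\an(e_v)$. Thus $\an(z)=\an(e_v)$, and by Theorem~\ref{twin} the vertices $zE_{k\ell}$ and $e_vE_{k\ell}$ are twins; since twins are preserved by automorphisms, $\sigma^{-1}(zE_{k\ell})=xE_{ij}$ and $\sigma^{-1}(e_vE_{k\ell})$ are twins. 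Applying Proposition~\ref{elementary} to $\sigma^{-1}$ (which maps $\M_n(S_2)$ onto $\M_n(S_1)$, that is, with the roles of $S_1$ and $S_2$ exchanged) yields $\sigma^{-1}(e_vE_{k\ell})=y'E_{cd}$ with $y'\in S_1\setminus\Z(S_1)$, so $y'\neq 0$. By Theorem~\ref{twin} the twins of $xE_{ij}$ are exactly the matrices $cE_{ij}$ with $\an(c)=\an(x)$, so necessarily $(c,d)=(i,j)$ and $\an(y')=\an(x)$. But $x\in\Z(S_1)$ gives $t\in S_1\setminus\{0\}$ with $xt=0$, that is $t\in\an(x)=\an(y')$, hence $y't=0$ with $t\in S_1\setminus\{0\}$, contradicting $y'\notin\Z(S_1)$. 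Therefore $z\in\Z(S_2)$.

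The matrix-unit bookkeeping for $\al$ and $\ar$ of a scaled matrix unit, and the description of its twin class via Theorem~\ref{twin}, are routine. The main obstacle is the last step: one must run Proposition~\ref{elementary} for $\sigma^{-1}$ and carefully track that the non-zero-divisor preimage of $e_vE_{k\ell}$ is forced to sit at exactly the position $(i,j)$ of $xE_{ij}$ and to have the same $S$-annihilator as $x$, which is precisely what conflicts with $x\in\Z(S_1)$.
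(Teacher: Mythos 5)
Your proof is correct. The first half, showing that $\sigma(xE_{ij})$ is supported at a single position $(k,\ell)$, follows the paper's argument essentially verbatim: both start from the inclusions $\al(e_uE_{ij})\subseteq\al(xE_{ij})$ and $\ar(e_uE_{ij})\subseteq\ar(xE_{ij})$, push them through $\sigma$ via Lemma~\ref{annihilators} and Proposition~\ref{elementary}, and read off the support of $\sigma(xE_{ij})$ from $\al(yE_{k\ell})$ and $\ar(yE_{k\ell})$. Where you diverge is in proving $z\in\Z(S_2)$. The paper simply notes that, because $x\in\Z(S_1)$, the inclusions above are \emph{strict}, hence $\al(yE_{k\ell})\neq\al(zE_{k\ell})$ and $\ar(yE_{k\ell})\neq\ar(zE_{k\ell})$, which directly yields a nonzero element of $S_2$ annihilating $z$. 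You instead argue by contradiction: if $z\notin\Z(S_2)$ then $\an(z)=\an(e_v)$, so $zE_{k\ell}$ and $e_vE_{k\ell}$ are twins by Theorem~\ref{twin}, and applying Proposition~\ref{elementary} to $\sigma^{-1}$ (legitimate, since $u$ and $v$ play symmetric roles in its hypotheses) forces $\sigma^{-1}(e_vE_{k\ell})$ to be a non-zero-divisor multiple of $E_{ij}$ with the same annihilator as $x$, which is impossible for $x\in\Z(S_1)$. This is longer but equally valid, and it is in fact the same device the paper uses to prove the second half of Proposition~\ref{elementary} itself; the paper's one-line strictness argument is more economical here, while your version makes the structure of the twin classes explicit. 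Your handling of the trivial case $x=0$ (needed so that the twin class of $xE_{ij}$ really consists only of matrices supported at $(i,j)$) is also appropriate.
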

\begin{proof}
By Proposition~\ref{elementary}, we know that $\sigma(e_uE_{ij}) = y E_{k\ell}$ for some $y \notin \Z(S_2)$ and $k,\ell\in \{1, \ldots, n \}$.  Since $x \in \Z(S_1)$, we have $\al (E_{ij}) \subset \al(xE_{ij})$ and $\ar (E_{ij}) \subset \ar(xE_{ij})$. By Lemma~\ref{annihilators}, it follows that $\al (yE_{k\ell}) \subset \al(\sigma(xE_{ij}))$ and also $\ar (yE_{k\ell}) \subset \ar(\sigma(xE_{ij}))$. This implies that all entries of  $\sigma(xE_{ij})$ are zeros except entry $(k,\ell)$, so $\sigma(xE_{ij})=zE_{k\ell}$ for some $z \in S_2$. Because $\al (yE_{k\ell}) \neq \al(\sigma(xE_{ij}))=\al(zE_{k\ell})$ and $\ar (yE_{k\ell}) \neq \ar(\sigma(xE_{ij}))=\ar(zE_{k\ell})$, we must have $z\in \Z(S_2)$.
\end{proof}

\begin{lemma}
\label{permutations}
Let $S$ be a commutative antiring with identity 
and let  $\alpha \in S \setminus \Z(S)$ be of maximal length $s$ with  decomposition $\alpha = e_1+ e_2+\cdots+e_s$. Let $u, v \in \{1, \ldots, s \}$, $S_1=e_uS$ and $S_2=e_vS$. Let $n \in \NN$ and $\sigma \in \Aut(\Gamma(\M_n(S)))$ such that $\sigma(\M_n(S_1))=\M_n(S_2)$.  Then there exists $\pi \in \Sym(n)$ such that 
$\overline{\sigma(e_uE_{ij})} = \overline{e_vE_{\pi(i)\pi(j)}}$ for all $i,j\in\{1,\ldots,n\}$. 
\end{lemma}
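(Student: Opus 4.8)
The plan is to use Proposition~\ref{elementary} to reduce the action of $\sigma$ on the elementary matrices $e_uE_{ij}$ to a map on index pairs, and then to show that this map on pairs respects the ``row'' and ``column'' structure, which will force it to come from a single permutation $\pi\in\Sym(n)$ applied to both coordinates. Concretely, by Proposition~\ref{elementary}, for each $i,j$ there are $k,\ell$ and $y\notin\Z(S_2)$ with $\sigma(e_uE_{ij})=yE_{k\ell}$; since $e_uE_{ij}$ and $e_vE_{k\ell}$ are then twin vertices (both have full left and right annihilators among $\M_n(S_2)$-matrices, because $y$ and $e_v$ are non-zero-divisors of $S_2$, so $\al(yE_{k\ell})=\al(e_vE_{k\ell})$ and similarly on the right — invoking Theorem~\ref{twin}), we get $\overline{\sigma(e_uE_{ij})}=\overline{e_vE_{k\ell}}$. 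Define $\varphi(i,j)=(k,\ell)$; this is well-defined and, running the same argument for $\sigma^{-1}$ with the roles of $S_1,S_2$ swapped, a bijection on $\{1,\ldots,n\}^2$.

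First I would establish that $\varphi$ preserves ``sharing a row'': $e_uE_{ij}$ and $e_uE_{ij'}$ multiply (on the right, in the appropriate order) in a way detectable by the digraph relation precisely when they share an index, and since $\alpha\notin\Z(S)$ we have $e_u^2\neq0$, so $e_uE_{ij}\,e_uE_{jt}\neq0$ while $e_uE_{ij}\,e_uE_{j't}=0$ for $j'\neq j$. Transporting these (non)adjacencies through $\sigma$ and passing to $\overline{\Gamma}$ — legitimate since $\overline{\Gamma(\M_n(S_1))}=\overline{\Gamma(\M_n(S_2))}$ up to the identifications already in play, and since adjacency of twin classes is well-defined — shows that the second coordinate of $\varphi(i,j)$ depends only on $j$ and the first only on $i$: writing $\varphi(i,j)=(\pi_1(i),\pi_2(j))$ for maps $\pi_1,\pi_2$, bijectivity of $\varphi$ forces $\pi_1,\pi_2\in\Sym(n)$. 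The key remaining point is that $\pi_1=\pi_2$. For this I would test a ``square'' product: $e_uE_{ij}\,e_uE_{ji}\neq0$ is self-referential and not enough, so instead compare $e_uE_{ij}$ with $e_uE_{jk}$: their product $e_uE_{ij}e_uE_{jk}=e_u^2E_{ik}\neq0$ detects that the column index $j$ of the first equals the row index $j$ of the second. Applying $\sigma$, the column index $\pi_2(j)$ of $\overline{\sigma(e_uE_{ij})}$ must equal the row index $\pi_1(j)$ of $\overline{\sigma(e_uE_{jk})}$, giving $\pi_1(j)=\pi_2(j)$ for all $j$. Setting $\pi:=\pi_1=\pi_2$ yields $\overline{\sigma(e_uE_{ij})}=\overline{e_vE_{\pi(i)\pi(j)}}$.

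The step I expect to be the main obstacle is making the adjacency-transport arguments airtight at the level of $\overline{\Gamma}$: one must be careful that ``$AB\neq0$'' versus ``$AB=0$'' is genuinely a property of the twin classes $\overline A,\overline B$ (it is, since twins have equal annihilators, hence $AB=0\iff \overline A\to\overline B$ is well-defined), and that the identification $\overline{\Gamma(\M_n(S))}=\overline{\Gamma(\M_n(\alpha S))}$ used in Lemma~\ref{componentwise} lets us freely pass products of the $e_uE_{ij}$ through $\sigma$ without $\sigma$ a priori being multiplicative. A secondary subtlety is ruling out ``degenerate'' possibilities in the case $n=1$, where the row/column distinction collapses; there the statement is vacuous or immediate since $\Sym(1)$ is trivial and $\sigma(e_uS)=e_vS$ already gives $\overline{\sigma(e_uE_{11})}=\overline{e_vE_{11}}$ by Proposition~\ref{elementary}. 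Once $\pi_1,\pi_2$ are produced and shown equal, no further computation is needed.
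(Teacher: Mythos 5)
Your proof is correct, and although it starts from the same place as the paper's (Proposition~\ref{elementary} turns $\sigma$ into a map $\varphi$ on index pairs, with $\overline{\sigma(e_uE_{ij})}=\overline{e_vE_{\varphi(i,j)}}$ because a non-zero-divisor $y$ of $S_2=e_vS$ satisfies $\an(y)=\an(e_v)$), the way you extract the permutation is genuinely different. The paper proves that the first coordinate of $\varphi(i,j)$ is independent of $j$ by an additive counting argument: it applies Lemma~\ref{additive} to $\sum_{r,s\neq i}e_uE_{rs}$, notes that by injectivity of $\sigma$ the image is a matrix with exactly $n$ zero entries, and deduces $k=k'$; columns are handled symmetrically, and only at the end is the single relation $E_{jj}E_{kk}=0$ transported to force $\pi=\pi'$. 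You instead transport the full multiplicative pattern $e_uE_{ij}\,e_uE_{k\ell}\neq 0\iff j=k$ through $\sigma$, which is legitimate exactly as you say: $\sigma$ preserves the zero-product relation, $\sigma(e_uE_{ij})$ is literally of the form $yE_{k\ell}$ so the products can be computed on the nose, and a product of elementary matrices at non-matching positions vanishes irrespective of the coefficients (so the implication ``$\sigma(A)\sigma(B)\neq0\Rightarrow$ positions match'' needs no hypothesis on $yy'$). This buys you a shorter argument that avoids Lemma~\ref{additive} and the zero-entry count entirely; in fact the single family of relations $\varphi_2(i,j)=\varphi_1(j,t)$ (from $e_uE_{ij}e_uE_{jt}=e_u^2E_{it}\neq0$, valid since $e_u^2=\alpha e_u\neq0$) already shows simultaneously that $\varphi_2$ depends only on $j$, that $\varphi_1$ depends only on its first argument, and that the two resulting maps coincide, so your separate ``$\pi_1=\pi_2$'' step is redundant rather than the key remaining point. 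Two cosmetic remarks: the appeal to $\overline{\Gamma(\M_n(S))}=\overline{\Gamma(\M_n(\alpha S))}$ is not needed anywhere in your argument, and ``$e_uE_{ij}$ and $e_vE_{k\ell}$ are twin vertices'' should read ``$yE_{k\ell}$ and $e_vE_{k\ell}$ are twin vertices.''
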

\begin{proof}
Let $i,j,j'\in\{1,\ldots,n\}$ with $j\neq j'$. By Proposition~\ref{elementary}, there exist $k,k',\ell,\ell'\in\{1,\ldots,n\}$ such that  $\overline{\sigma(e_uE_{ij})} = \overline{e_vE_{k\ell}}$ and $\overline{\sigma(e_uE_{ij'})} = \overline{e_vE_{k'\ell'}}$. For all $r,s\in\{1,\ldots,n\}$ with $s \neq i$, we have $e_uE_{rs}(e_uE_{ij}+e_uE_{ij'})=0$. By Lemma~\ref{additive}, this implies that $\overline{\sigma(e_uE_{rs})(e_uE_{k\ell}+e_uE_{k'\ell'})}=0$ and thus $\overline{\left(\sum_{r,s \neq i}\sigma(e_uE_{rs})\right)(e_uE_{k\ell}+e_uE_{k'\ell'})}=0$. By Proposition~\ref{elementary}, $\overline{\sigma(e_uE_{rs})}=\overline{e_vE_{r's'}}$ for some $r',s' \in\{1,\ldots,n\}$. Since $\sigma$ is a permutation, $\sum_{r,s \neq i}\sigma(e_uE_{rs})$ is a matrix with exactly $n$ entries equal to $0$. It follows that $k=k'$. 

By the paragraph above, there exists $\pi \in \Sym(n)$ such that $\overline{\sigma(e_uE_{ab})}=\overline{e_vE_{\pi(a)c}}$, for some $c$. A similar argument yields that there exists a permutation  such that  $\overline{\sigma(e_uE_{ab})}=\overline{e_vE_{c\pi'(b)}}$, for some $c$. However, for every $j,k\in\{1,\ldots,n\}$ with $j\neq k$, we have $E_{jj}E_{kk}=0$ and thus $E_{\pi(j)\pi'(j)}E_{\pi(k)\pi'(k)}=0$. This implies that $\pi(k) \neq \pi'(j)$ for every $k \neq j$, so $\pi(j)=\pi'(j)$.
Therefore $\pi'=\pi$.
\end{proof}



For $\pi\in\Sym(n)$ and $A\in\M_n(S)$, let $\theta_\pi(A)$ be the matrix obtained from $A$ by applying the permutation $\pi$ to its rows and columns. Note that  $\theta_\pi$ induces a permutation of  $\M_n(S)$. 

\begin{corollary}
\label{inner}
Let $S$ be a commutative antiring with identity 
and let  $\alpha \in S \setminus \Z(S)$ be of maximal length $s$ with  decomposition $\alpha = e_1+ e_2+\cdots+e_s$. Let $u, v \in \{1, \ldots, s \}$, $S_1=e_uS$ and $S_2=e_vS$. Let $n \in \NN$ and $\sigma \in \Aut(\Gamma(\M_n(S)))$ such that $\sigma(\M_n(S_1))=\M_n(S_2)$. Then there exist $\pi\in\Sym(n)$ and $\tau$ an isomorphism from $\Gamma(S_1)$ to $\Gamma(S_2)$ such that, if we extend $\tau$ to a mapping $\M_n(S_1)\to\M_n(S_2)$ and restrict $\sigma$ to $\M_n(S_1)$, then $\overline{\sigma}=\overline{\theta_\pi\circ \tau}$.
\end{corollary}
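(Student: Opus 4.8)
The plan is to assemble the corollary from the three preceding results (Proposition~\ref{elementary}, Lemma~\ref{divisors}, Lemma~\ref{permutations}) by "factoring out" the row/column permutation and checking that what remains is induced by an isomorphism of zero-divisor digraphs. First I would take the permutation $\pi\in\Sym(n)$ supplied by Lemma~\ref{permutations}, so that $\overline{\sigma(e_uE_{ij})}=\overline{e_vE_{\pi(i)\pi(j)}}$ for all $i,j$, and consider the composite $\rho:=\theta_{\pi}^{-1}\circ\sigma$ restricted to $\M_n(S_1)$; equivalently, work with $\overline\rho=\overline{\theta_\pi}^{-1}\circ\overline\sigma$ on $\overline{\M_n(S_1)}$. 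By construction $\overline{\rho(e_uE_{ij})}=\overline{e_vE_{ij}}$ for every $i,j$, i.e. $\rho$ fixes the "diagonal position" of each rank-one elementary-type matrix over $S_1$. Then, combining Proposition~\ref{elementary} with Lemma~\ref{divisors}, for any $x\in S_1$ (zero-divisor or not) and any $i,j$ we get $\rho(xE_{ij})=x'E_{ij}$ for some $x'\in S_2$, with $x'\notin\Z(S_2)$ exactly when $x\notin\Z(S_1)$; in particular $\rho$ maps $S_1E_{ij}$ bijectively to $S_2E_{ij}$ preserving the zero/nonzero and zero-divisor dichotomy.

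Next I would define $\tau\colon S_1\to S_2$ by declaring $\tau(x)$ to be the unique element of $S_2$ with $\rho(xE_{11})=\tau(x)E_{11}$ (with $\tau(0)=0$). The key claim is that this is well-defined independently of the chosen position, i.e. $\rho(xE_{ij})=\tau(x)E_{ij}$ for all $i,j$: this follows because $xE_{i1}$ and $xE_{11}$ have a controlled relationship under multiplication by elementary matrices — for instance $E_{1i}\cdot xE_{ij}=xE_{1j}$ and $xE_{ij}\cdot E_{j1}=xE_{i1}$ — so applying Lemma~\ref{annihilators} and Lemma~\ref{additive} to these products forces the single nonzero entry of $\rho(xE_{ij})$ to have the same value regardless of $(i,j)$; one pins this down at the level of $\overline{\Gamma}$ first and then lifts using that a twin of an element of $S_2E_{ij}$ lies in $S_2E_{ij}$ (Theorem~\ref{twin}). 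Granting this, I would verify $\tau$ is a digraph isomorphism $\Gamma(S_1)\to\Gamma(S_2)$: it is a bijection (its inverse is built the same way from $\sigma^{-1}$), and $xy=0$ in $S_1$ iff $xE_{11}\cdot yE_{11}=0$ iff $\tau(x)E_{11}\cdot\tau(y)E_{11}=0$ iff $\tau(x)\tau(y)=0$, using Lemma~\ref{annihilators} to transport the adjacency $xE_{11}\to yE_{11}$ through $\sigma$ and then through $\theta_\pi^{-1}$ (which is itself an automorphism of $\Gamma(\M_n(S))$). Finally, extending $\tau$ entrywise to $\M_n(S_1)\to\M_n(S_2)$, I would conclude $\overline{\sigma}=\overline{\theta_\pi\circ\tau}$ on $\overline{\M_n(S_1)}$ by checking the two sides agree on every $\overline{xE_{ij}}$ — they do, since $\overline{(\theta_\pi\circ\tau)(xE_{ij})}=\overline{\theta_\pi(\tau(x)E_{ij})}=\overline{\tau(x)E_{\pi(i)\pi(j)}}=\overline{\sigma(xE_{ij})}$ — and these classes generate $\overline{\M_n(S_1)}$ in the sense that every matrix over $S_1$ is a sum of such, so Lemma~\ref{additive} propagates the agreement to all of $\M_n(S_1)$.

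The main obstacle I anticipate is the well-definedness of $\tau$, that is, showing the value of the unique nonzero entry of $\rho(xE_{ij})$ genuinely does not depend on $(i,j)$ — and doing so purely through annihilator/twin-vertex arguments, since we have no additive structure to exploit directly on the value of a single entry. The delicate point is that Lemma~\ref{permutations} and Proposition~\ref{elementary} only locate the \emph{position} of $\rho(xE_{ij})$ and, via Lemma~\ref{divisors}, its zero-divisor \emph{status}, not the precise entry; extracting equality of entries requires exploiting multiplicative relations among the $xE_{ij}$ (left/right multiplication by permutation-type elementary matrices $E_{ab}$) together with the fact that $\overline\rho$ commutes with these multiplications up to twins, and then one must be careful that passing to $\overline\Gamma$ has not collapsed distinct entry values — here it is essential that $S_1,S_2$ are indecomposable subsemirings (Lemma~\ref{max}), which is what made the single-entry conclusions of Proposition~\ref{elementary} and Lemma~\ref{divisors} possible in the first place, and one should check that within a fixed position $S_2E_{ij}$ the twin relation is trivial so that $\overline{x'E_{ij}}=\overline{x''E_{ij}}$ forces $x'=x''$.
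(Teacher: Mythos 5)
Your proposal follows essentially the same route as the paper's proof: factor out $\theta_\pi$ from Lemma~\ref{permutations} to get $\rho=\theta_\pi^{-1}\circ\sigma$ fixing positions, show via annihilator-transport that the single nonzero entry of $\rho(xE_{ij})$ is independent of $(i,j)$ up to twins, define $\tau$ from the $(1,1)$ position, verify it is an isomorphism $\Gamma(S_1)\to\Gamma(S_2)$, and propagate to all of $\M_n(S_1)$ via Lemma~\ref{additive}. One small note: your closing worry that the twin relation within $S_2E_{ij}$ must be trivial is unnecessary --- the conclusion is only $\overline{\sigma}=\overline{\theta_\pi\circ\tau}$, so it suffices that the transported entries have equal annihilators (which is exactly what the argument yields), not that they be literally equal.
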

\begin{proof}
By Lemma~\ref{permutations}, there exists $\pi \in \Sym(n)$ such that $\overline{\sigma(e_uE_{ij})} = \overline{\theta_\pi(e_vE_{ij})}$ for all $i,j\in\{1,\ldots,n\}$. Let $\rho= \theta_\pi^{-1}\circ \sigma$ and note that  $\rho\in\Aut(\Gamma(\M_n(S)))$ and we have $\overline{\rho(e_uE_{ij})}=\overline{e_vE_{ij}}$ for all $i, j \in\{1,\ldots,n\}$. 

Let $x \in \Z(S_1)$ and $i, j,j' \in\{1,\ldots,n\}$. Clearly, $\rho(\M_n(S_1))=\M_n(S_2)$ so, by Lemma~\ref{divisors}, there exist $z,z' \in \Z(S_2)$ such that  $\rho(xE_{ij})=zE_{ij}$  and $\rho(xE_{ij'})=z'E_{ij'}$. 

We show that $\an(z) \subseteq \an(z')$. Let $a\in S_2$ such that $az=0$. Note that $(aE_{ii})(zE_{ij})=0$. Since $a\in\Z(S_2)$, Lemma~\ref{divisors} implies that there exists $b \in \Z(S_1)$ such that $aE_{ii}=\rho(b E_{ii})$, hence $\rho(b E_{ii}) \rho(x E_{ij}) = 0$ and therefore also $(b E_{ii})(xE_{ij}) = 0$ which implies $bx = 0$. It follows that   $(b E_{ii}) (x E_{ij'})=0$, $\rho(b E_{ii}) \rho(x E_{ij'}) = 0$ and $(aE_{ii})(z'E_{ij'})=0$ which yields $az'=0$. 

We have shown that $\an(z) \subseteq \an(z')$. A symmetrical argument yields $\an(z') \subseteq \an(z)$ hence $\an(z)=\an(z')$ which implies that $\overline{\rho(xE_{ij'})}=\overline{z'E_{ij'}}=\overline{zE_{ij'}}$. A similar argument shows that $\overline{\rho(xE_{i'j})}=\overline{zE_{i'j}}$ for all $i' \in\{1,\ldots,n\}$. This implies that $\overline{\rho(xE_{k\ell})}=\overline{zE_{k\ell}}$ for all $k,\ell \in\{1,\ldots,n\}$.

Let $\tau$ denote the mapping $S_1 \rightarrow S_2$ that satisfies $\rho(xE_{11})=\tau(x)E_{11}$. Since $\rho$ is a bijection from $\M_n(S_1)$ to $\M_n(S_2)$, $\tau$ is a bijection from $S_1$ to $S_2$. If $x,y\in S_1$, then $xy=0$ if and only if $(xE_{11})(yE_{11})=0$ if and only if $\tau(x)\tau(y)=0$,  therefore $\tau$ is an isomorphism from $\Gamma(S_1)$ to $\Gamma(S_2)$. Now, extend $\tau$ to an entry-wise mapping $\M_n(S_1)\to \M_n(S_2)$. It is easy to check that $\tau$ induces an isomorphism from $\Gamma(\M_n(S_1))$ to $\Gamma(\M_n(S_2))$ and that, restricted to $\V(\Gamma(\M_n(S_1)))$, we have $\overline{\rho}=\overline{\tau}$. As $\sigma=\theta_\pi \circ \rho$, this concludes the proof.
\end{proof}

We can now join these findings into the following theorem.

\begin{theorem}
\label{characterization}
Let $S$ be a commutative antiring with identity 
and let $\alpha \in S \setminus \Z(S)$ be of maximal length $s$ with decomposition $\alpha = e_1+ e_2+\cdots+e_s$. Let $n \in \NN$ and $\sigma \in \Aut(\Gamma(\M_n(S)))$. Then there exist $\omega \in \Sym(s)$ and, for every $i\in\{1,\ldots,s\}$, there exist $\pi_i\in\Sym(n)$ and an isomorphism $\tau_i : \Gamma(e_iS) \rightarrow \Gamma(e_{\omega(i)}S)$ such that, if we extend $\tau_i$ to a mapping $\M_n(e_iS)\to\M_n(e_{\omega(i)}S)$, then
$$\overline{\sigma(A)}=\overline{\left(\sum_{i=1}^s{(\theta_{\pi_i}\circ\tau_i)(e_iA)}\right)} \text { for all } A \in \M_n(S).$$  

Conversely, if $\omega\in \Sym(s)$ has the property that, for every $i\in\{1,\ldots,s\}$, we have $\Gamma(e_iS)\cong\Gamma(e_{\omega(i)}S)$, $\tau_i$ is an isomorphism from $\Gamma(e_iS)$ to $\Gamma(e_{\omega(i)}S)$ and $\pi_i\in\Sym(n)$, then $\sigma$ defined with $\sigma(A)=\sum_{i=1}^s{(\theta_{\pi_i}\circ\tau_i)(e_iA)}$ is an automorphism of $\Gamma(\M_n(S))$.
\end{theorem}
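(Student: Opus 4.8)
\textbf{Proof proposal for Theorem~\ref{characterization}.}

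The plan is to assemble the forward direction from the lemmas already proved, and then verify the converse by a direct computation modulo twins. For the forward direction, start with an arbitrary $\sigma\in\Aut(\Gamma(\M_n(S)))$. Apply Lemma~\ref{componentwise} to obtain $\omega\in\Sym(s)$ with $\sigma(e_r\M_n(S))=e_{\omega(r)}\M_n(S)$ for every $r$. Fix $i\in\{1,\ldots,s\}$ and set $S_1=e_iS$, $S_2=e_{\omega(i)}S$; then $\sigma$ restricts to an isomorphism $\Gamma(\M_n(S_1))\to\Gamma(\M_n(S_2))$, so Corollary~\ref{inner} supplies $\pi_i\in\Sym(n)$ and an isomorphism $\tau_i:\Gamma(e_iS)\to\Gamma(e_{\omega(i)}S)$ with $\overline{\sigma}=\overline{\theta_{\pi_i}\circ\tau_i}$ on $\V(\Gamma(\M_n(S_1)))$. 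The last ingredient is additivity: for $A\in\M_n(S)$ we have $A=\sum_{i=1}^s e_iA$ (using $\sum e_i=\alpha$ and $\alpha\notin\Z(S)$, so multiplication by $\alpha$ is injective on $\M_n(S)$, but in fact one only needs this identity modulo the twin relation, and more carefully one should note $\alpha A$ and $A$ are twins, then argue on $\alpha A$), and by iterating Lemma~\ref{additive} we get $\overline{\sigma(A)}=\overline{\sum_{i=1}^s\sigma(e_iA)}$. Since $e_iA\in\M_n(e_iS)$, each summand is $\overline{(\theta_{\pi_i}\circ\tau_i)(e_iA)}$, giving the stated formula.

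For the converse, let $\omega$, the $\tau_i$, and the $\pi_i$ be as hypothesised and define $\sigma(A)=\sum_{i=1}^s(\theta_{\pi_i}\circ\tau_i)(e_iA)$. I would first check $\sigma$ is a bijection of $\M_n(S)$: since $A\mapsto(e_1A,\ldots,e_sA)$ identifies $\M_n(S)$ with $\bigtimes_{i=1}^s\M_n(e_iS)$ (this is a semiring isomorphism because $S\cong\bigtimes e_iS$, which follows from $\alpha$ having maximal length together with Lemma~\ref{max}), and each $\theta_{\pi_i}\circ\tau_i:\M_n(e_iS)\to\M_n(e_{\omega(i)}S)$ is a bijection, $\sigma$ is the composite of these with the coordinate permutation $\omega$, hence bijective. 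Then I would verify that $\sigma$ preserves the arc relation: $AB=0$ in $\M_n(S)$ iff $(e_iA)(e_iB)=0$ in $\M_n(e_iS)$ for all $i$ (again using the product decomposition and antinegativity), iff $(\theta_{\pi_i}\circ\tau_i)(e_iA)\cdot(\theta_{\pi_i}\circ\tau_i)(e_iB)=0$ for all $i$ (since $\theta_{\pi_i}$ is induced by simultaneous row/column permutation, hence an automorphism of $\Gamma(\M_n(e_{\omega(i)}S))$, and $\tau_i$ extended entrywise is a graph isomorphism on matrix rings as observed in the proof of Corollary~\ref{inner}), iff $\sigma(A)\sigma(B)=0$ (products in distinct coordinates $e_{\omega(i)}S$ of the target vanish because the $e_j$'s are orthogonal). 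The same equivalences run with the roles of $A,B$ swapped, so both $u\to v$ and $v\to u$ are preserved, and $\sigma\in\Aut(\Gamma(\M_n(S)))$.

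The main obstacle is the bookkeeping around the decomposition $S\cong\bigtimes_{i=1}^s e_iS$ and the fact that the theorem's first statement is only an equality modulo twins, whereas the converse produces an honest automorphism. In the forward direction one must be careful that the identity $A=\sum e_iA$ need not hold on the nose (it holds after multiplying by a non-zero-divisor, or up to the twin relation), so the cleanest route is to prove everything for $\alpha A$ in place of $A$ and invoke that $\alpha A$ and $A$ are twins via Lemma~\ref{additive} or Theorem~\ref{twin}; alternatively, observe directly that $\overline{\Gamma(\M_n(S))}=\overline{\Gamma(\M_n(\alpha S))}$ as used in Lemma~\ref{componentwise}. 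In the converse direction the subtlety is the opposite: one must check that the chosen representatives $\theta_{\pi_i}\circ\tau_i$ — not merely their classes modulo twins — still glue to a genuine permutation, which is exactly why the product decomposition of $S$ is needed rather than just a statement about $\overline{\Gamma}$. I expect writing out this product decomposition carefully (including that orthogonality of the $e_i$ plus maximal length forces $1=\sum e_i$ up to annihilators, or at least that $\alpha S\cong\bigtimes e_iS$ faithfully) to be where most of the real work lies; everything else is a formal consequence of the preceding results.
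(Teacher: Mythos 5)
Your forward direction is essentially the paper's own argument: Lemma~\ref{componentwise} gives $\omega$, Corollary~\ref{inner} gives $\pi_i$ and $\tau_i$ on each block $\M_n(e_iS)$, and the identity $\overline{A}=\overline{\alpha A}=\overline{e_1A+\cdots+e_sA}$ (via Theorem~\ref{twin}) combined with Lemma~\ref{additive} assembles the displayed formula. Your caveat that $A=\sum_i e_iA$ need not hold on the nose, and that one should pass to $\alpha A$, is exactly the step $\overline{A}=\overline{\alpha A}$ in the paper, so this half is correct and matches the intended route.

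The converse --- which the paper's proof in fact does not address at all --- is where your proposal has a genuine gap. You ground the bijectivity of $\sigma$ in the claim that $a\mapsto(e_1a,\ldots,e_sa)$ is a semiring isomorphism $S\cong\bigtimes_{i=1}^s e_iS$, said to follow from the maximality of $\ell(\alpha)$ together with Lemma~\ref{max}. Neither injectivity nor surjectivity of that map follows from those facts: an antiring has no cancellation, so $\alpha\notin\Z(S)$ only yields $\alpha x=0\Rightarrow x=0$, not $\alpha x=\alpha y\Rightarrow x=y$, and $\sum_{i=1}^s e_iS$ may be a proper subset of $S$. A concrete instance already defeats the bijectivity argument: take $S=\NN$, so that $s=1$ and $e_1=\alpha=2$ is of maximal length; with $\tau_1=\mathrm{id}$ and $\pi_1=\mathrm{id}$ your $\sigma$ is $A\mapsto 2A$, which maps $\M_n(\NN)$ into $\M_n(2\NN)$ and is therefore not a permutation of $\M_n(S)$. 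So before the product decomposition can be invoked you must either extract and justify an additional hypothesis guaranteeing $\sum_i e_iS=S$ (for instance that the $e_i$ are idempotents summing to $1$), or reformulate the converse modulo the twin relation. By contrast, your verification that $\sigma$ preserves arrows --- using orthogonality of the $e_{\omega(i)}$, antinegativity, and the fact that $\theta_{\pi_i}$ and the entrywise extension of $\tau_i$ preserve zero products --- is sound; only the bijectivity step fails as written.
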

\begin{proof}
By Lemma~\ref{componentwise}, there exists  $\omega\in\Sym(s)$ such that, for every $i\in\{1,\ldots,s\}$, we have $\sigma(e_i \M_n(S)) = e_{\omega(i)} \M_n(S)$.

By Corollary~\ref{inner}, there exist $\pi_i\in\Sym(n)$ and $\tau_i$ an isomorphism from $\Gamma(e_iS)$ to $\Gamma(e_{\omega(i)}S)$ such that, if we extend $\tau_i$ to a mapping $\M_n(e_iS)\to\M_n(e_{\omega(i)}S)$ and restrict $\sigma$ to $\M_n(e_iS)$, then $\overline{\sigma}=\overline{\theta_{\pi_i}\circ \tau_i}$.

Now, let $A\in \M_n(S)$. By Theorem~\ref{twin},  $\overline{A}=\overline{\alpha A}=\overline{e_1A+ e_2A+\cdots+e_sA}$ and the result follows by Lemma~\ref{additive}.
\end{proof}

The following is a well-known easy exercise.
\begin{observation}\label{obs}
Let $\Gamma$ be a digraph and let $\overline{\Gamma_{t}}$ be the vertex-labelled digraph obtained from $\overline{\Gamma}$ by labelling every $\overline{v}\in\overline{\Gamma}$ with the size of the $\sim$-equivalence class of $v$. Let $\Aut(\overline{\Gamma_{t}})$ be the labelling-preserving group of automorphisms of $\overline{\Gamma_{t}}$ and let $K$ be the group of regular automorphisms of $\Gamma$. If the sizes of the $\sim$-equivalence classes in $\Gamma$ are $c_1,\ldots,c_n$, then $K\cong \prod_{i=1}^n \Sym(c_i)$ and $\Aut(\Gamma)\cong K\rtimes \Aut(\overline{\Gamma_{t}})$.
\end{observation}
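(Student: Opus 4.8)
The plan is to prove Observation~\ref{obs} by a direct structural analysis of how $\Aut(\Gamma)$ interacts with the $\sim$-equivalence relation, which is preserved by $\Aut(\Gamma)$ by the remarks in Section~2.

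First I would establish the action on equivalence classes. Since $\sim$ is an $\Aut(\Gamma)$-invariant equivalence relation, every $\sigma\in\Aut(\Gamma)$ induces $\overline{\sigma}\in\Aut(\overline{\Gamma})$, and the map $\sigma\mapsto\overline{\sigma}$ is a group homomorphism. Its kernel is exactly $K$, the group of regular automorphisms, by the definition of ``regular''. So $K\trianglelefteq\Aut(\Gamma)$ and $\Aut(\Gamma)/K$ embeds into $\Aut(\overline{\Gamma})$. Next I would identify the image: $\overline{\sigma}$ clearly preserves the labelling of $\overline{\Gamma_t}$ (an automorphism maps a vertex to one with the same number of preimages, since $\sigma$ restricted to a $\sim$-class is a bijection onto another $\sim$-class), so the image lies in $\Aut(\overline{\Gamma_t})$. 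For surjectivity onto $\Aut(\overline{\Gamma_t})$, given $\overline{\mu}\in\Aut(\overline{\Gamma_t})$, it sends each class to a class of equal size, so I can choose, for each class $\overline{v}$, an arbitrary bijection from the class of $v$ to the class of $\overline{\mu}(\overline{v})$; patching these together gives a permutation $\mu$ of $\V(\Gamma)$. One checks $\mu\in\Aut(\Gamma)$: if $u\to w$ in $\Gamma$, then because all vertices in a $\sim$-class have the same in- and out-neighbourhoods, the adjacency depends only on $\overline{u}$ and $\overline{w}$, and $\overline{\mu}$ preserves $\overline{\Gamma}$, so $\mu(u)\to\mu(w)$. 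Thus $\sigma\mapsto\overline{\sigma}$ is onto $\Aut(\overline{\Gamma_t})$.

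Then I would identify $K$ itself. A regular automorphism fixes every $\sim$-class setwise, hence is a product, over the classes, of arbitrary permutations of each class --- and conversely any such product is an automorphism by the same ``adjacency depends only on the class'' observation. Since the classes have sizes $c_1,\dots,c_n$, this gives $K\cong\prod_{i=1}^n\Sym(c_i)$.

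Finally, to get the semidirect product decomposition $\Aut(\Gamma)\cong K\rtimes\Aut(\overline{\Gamma_t})$, it suffices to exhibit a splitting of the short exact sequence $1\to K\to\Aut(\Gamma)\to\Aut(\overline{\Gamma_t})\to 1$, i.e. a section $\Aut(\overline{\Gamma_t})\to\Aut(\Gamma)$ that is a group homomorphism. Fix a transversal: pick one representative in each $\sim$-class. For $\overline{\mu}\in\Aut(\overline{\Gamma_t})$ define $\widetilde{\mu}$ to be the permutation of $\V(\Gamma)$ that, on each $\sim$-class, uses the unique bijection to the target class that matches the chosen representatives (using that within a class any two vertices are interchangeable, so such a ``representative-respecting'' lift is well-defined and unique). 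One checks $\widetilde{\mu}\in\Aut(\Gamma)$ exactly as above, that $\overline{\widetilde{\mu}}=\overline{\mu}$, and that $\widetilde{\mu\nu}=\widetilde{\mu}\,\widetilde{\nu}$ because composing representative-respecting lifts is again representative-respecting. This section realizes $\Aut(\overline{\Gamma_t})$ as a complement to $K$ in $\Aut(\Gamma)$, giving the semidirect product. The only genuinely careful point --- though still routine --- is verifying that lifts defined class-by-class are honest automorphisms of $\Gamma$; this is where the defining property of $\sim$ (equal in- and out-neighbourhoods, so adjacency is a function of the pair of classes) does all the work, and it is the step I would write out most explicitly.
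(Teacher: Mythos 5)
The paper offers no proof of this Observation at all---it is stated as ``a well-known easy exercise''---so there is nothing to compare your argument against; your proposal is essentially the standard argument and is correct in outline. The homomorphism $\sigma\mapsto\overline{\sigma}$ with kernel $K$, the identification of its image with $\Aut(\overline{\Gamma_{t}})$, and the computation $K\cong\prod_i\Sym(c_i)$ are all handled correctly, and you rightly isolate the one fact doing all the work: since twins have identical in- and out-neighbourhoods, adjacency in $\Gamma$ (including loops and arcs inside a class) is a function of the pair of $\sim$-classes, so any permutation of $\V(\Gamma)$ compatible with a label-preserving automorphism of $\overline{\Gamma_{t}}$ is automatically an automorphism of $\Gamma$.

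The one genuine slip is in your construction of the section. You define $\widetilde{\mu}$ on a class $C$ as ``the unique bijection to the target class that matches the chosen representatives,'' but choosing a single representative in each class does not determine a bijection between two classes of size $c\geq 3$: there are $(c-1)!$ bijections sending one representative to the other, so your lift is neither well-defined nor unique as stated, and without a canonical choice you cannot conclude $\widetilde{\mu\nu}=\widetilde{\mu}\,\widetilde{\nu}$. The fix is immediate and standard: fix, for each class $C$, a full enumeration $\phi_C\colon C\to\{1,\ldots,|C|\}$ (not just a representative), and set $\widetilde{\mu}|_C=\phi_{\overline{\mu}(C)}^{-1}\circ\phi_C$; then $\widetilde{\mu}$ is uniquely determined, $\overline{\widetilde{\mu}}=\overline{\mu}$, and functoriality of composition gives a homomorphic section, hence the splitting $\Aut(\Gamma)\cong K\rtimes\Aut(\overline{\Gamma_{t}})$. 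With that one-line repair your proof is complete.
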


\begin{corollary}\label{finalcor}
Let $S$ be a commutative antiring with identity 
and let $\alpha \in S \setminus \Z(S)$ be of maximal length $s$ with decomposition $\alpha = e_1+ e_2+\cdots+e_s$. Say that $e_i$ is equivalent to $e_j$ if $\Gamma(e_iS)\cong\Gamma(e_jS)$. This defines a partition of $\{e_1,\ldots,e_s\}$. Up to relabelling, we may assume that $\{e_1,\ldots,e_p\}$ forms a complete set of representative of the equivalence classes. For $i\in\{1,\ldots,p\}$, let $x_i$ be the size of the equivalence class of $e_i$. Let $n\in\NN$ and let  $K$ be the group of regular automorphisms of $\Gamma(\M_n(S))$. Then  
$$\Aut(\Gamma(\M_n(S)))\cong K\rtimes\left(\prod_{i=1}^p((\Sym(n)\times \Aut(\overline{\Gamma_t}(e_iS)))\wr \Sym(x_i))\right).$$
\end{corollary}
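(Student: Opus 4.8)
The plan is to combine Theorem~\ref{characterization} with Observation~\ref{obs}, so the main task is to identify the group $\Aut(\overline{\Gamma_t}(\M_n(S)))$ of labelling-preserving automorphisms of the reduced vertex-labelled digraph. By Observation~\ref{obs} applied to $\Gamma=\Gamma(\M_n(S))$ we have $\Aut(\Gamma(\M_n(S)))\cong K\rtimes \Aut(\overline{\Gamma_t}(\M_n(S)))$, where $K$ is the group of regular automorphisms. So it suffices to prove
$$\Aut(\overline{\Gamma_t}(\M_n(S)))\cong \prod_{i=1}^p\bigl((\Sym(n)\times \Aut(\overline{\Gamma_t}(e_iS)))\wr \Sym(x_i)\bigr).$$

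First I would unpack Theorem~\ref{characterization}: it says exactly that every $\sigma\in\Aut(\Gamma(\M_n(S)))$ induces, on the reduced digraph, a map of the form $\overline{A}\mapsto \overline{\sum_i (\theta_{\pi_i}\circ\tau_i)(e_iA)}$, determined (modulo $\sim$) by a permutation $\omega\in\Sym(s)$ respecting the isomorphism classes of the $\Gamma(e_iS)$, together with the data of a $\pi_i\in\Sym(n)$ and an isomorphism $\tau_i\colon\Gamma(e_iS)\to\Gamma(e_{\omega(i)}S)$ for each $i$; and conversely every such choice of data yields an automorphism. The point is that because $\alpha\notin\Z(S)$ we have $\overline{\Gamma(\M_n(S))}=\overline{\Gamma(\M_n(\alpha S))}$ and $\alpha\M_n(S)=\bigtimes_{i=1}^s e_i\M_n(S)$ as digraphs (an internal direct product with no edges between distinct factors, since $e_ie_j=0$ forces $e_iX\cdot e_jY=0$ trivially — but edges within a factor are genuine), so $\overline{\Gamma_t}(\M_n(S))$ is the disjoint union $\bigsqcup_{i=1}^s \overline{\Gamma_t}(e_i\M_n(S))$ as a vertex-labelled digraph. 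Thus $\Aut(\overline{\Gamma_t}(\M_n(S)))$ is the group of label-preserving automorphisms of this disjoint union: it permutes isomorphic components and acts on each. Grouping the $s$ components $\overline{\Gamma_t}(e_i\M_n(S))$ into the $p$ isomorphism classes of sizes $x_1,\ldots,x_p$ gives the wreath product decomposition $\prod_{i=1}^p (\Aut(\overline{\Gamma_t}(e_i\M_n(S)))\wr\Sym(x_i))$, provided we also check that $\overline{\Gamma_t}(e_i\M_n(S))\cong\overline{\Gamma_t}(e_j\M_n(S))$ if and only if $\Gamma(e_iS)\cong\Gamma(e_jS)$ — one direction is an entry-wise extension as in Corollary~\ref{inner}, and the other follows because an isomorphism of the reduced matrix digraphs restricts (via Corollary~\ref{inner}, or rather its analogue for isomorphisms) to an isomorphism $\Gamma(e_iS)\cong\Gamma(e_jS)$ on the $E_{11}$-block.

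The remaining step is to identify $\Aut(\overline{\Gamma_t}(e_i\M_n(S)))$ with $\Sym(n)\times\Aut(\overline{\Gamma_t}(e_iS))$. Here Corollary~\ref{inner} (with $S_1=S_2=e_iS$, $u=v$) does the work: it shows every automorphism of $\Gamma(\M_n(e_iS))$ preserving $\M_n(e_iS)$ induces, on the reduced digraph, a map $\overline{\theta_\pi\circ\tau}$ with $\pi\in\Sym(n)$ and $\tau\in\Aut(\Gamma(e_iS))$. The maps $\theta_\pi$ (for $\pi\in\Sym(n)$) and the entry-wise extensions $\tau$ (for $\tau\in\Aut(\overline{\Gamma_t}(e_iS))$) commute, act faithfully on $\overline{\Gamma_t}(e_i\M_n(S))$, and together generate everything, while the label-preservation is automatic since all these maps are induced by genuine automorphisms of $\Gamma(\M_n(e_iS))$; one checks the intersection of the two subgroups is trivial (a $\theta_\pi$ inducing a trivial map on the reduced digraph forces $\pi$ to fix every $\overline{e_iE_{jj}}$, hence $\pi=\mathrm{id}$, using that the diagonal idempotents lie in distinct $\sim$-classes — this needs $\ell(e_i\cdot 1)\ge 1$, which holds). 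Substituting $\Aut(\overline{\Gamma_t}(e_i\M_n(S)))\cong\Sym(n)\times\Aut(\overline{\Gamma_t}(e_iS))$ into the wreath-product expression and feeding the result into Observation~\ref{obs} yields the claimed formula.

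The main obstacle I anticipate is bookkeeping rather than conceptual: carefully justifying that $\overline{\Gamma_t}(\M_n(S))$ genuinely decomposes as a labelled disjoint union over the $e_i$ (this rests on Theorem~\ref{twin} and the identity $\overline{A}=\overline{\sum_i e_iA}$, but one must confirm that the labels — i.e.\ $\sim$-class sizes in $\M_n(S)$ versus in each $e_i\M_n(S)$ — match up, which is where antinegativity and maximality of $\ell(\alpha)$ re-enter), and verifying that the semidirect and wreath products assemble with the correct actions (the $\Sym(x_i)$ permuting components must be shown to act compatibly with the regular part $K$, which again follows from Observation~\ref{obs} once the component structure is pinned down). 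None of these steps is hard, but they must be spelled out to make the isomorphism in the statement precise, and the definition of the equivalence "$e_i\sim e_j$ iff $\Gamma(e_iS)\cong\Gamma(e_jS)$" must be reconciled with "iff $\overline{\Gamma_t}(e_i\M_n(S))\cong\overline{\Gamma_t}(e_j\M_n(S))$" to know the wreath products are grouped correctly.
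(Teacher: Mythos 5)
Your overall architecture --- apply Observation~\ref{obs} to $\Gamma(\M_n(S))$ and then use Theorem~\ref{characterization} to identify $\Aut(\overline{\Gamma_t}(\M_n(S)))$ with the product of wreath products --- is the intended route (the paper states the corollary without a written proof, as a direct consequence of those two results). However, your central structural claim is wrong as stated: $\overline{\Gamma_t}(\M_n(S))$ is \emph{not} the disjoint union of the $\overline{\Gamma_t}(e_i\M_n(S))$. Since $\overline{A}=\overline{\alpha A}=\overline{\sum_i e_iA}$ and $AB=0$ if and only if $(e_iA)(e_iB)=0$ for every $i$ (because $\alpha^2\notin \Z(S)$), the map $\overline{A}\mapsto(\overline{e_1A},\ldots,\overline{e_sA})$ identifies $\overline{\Gamma}(\M_n(S))$ with the \emph{direct product} of the $\overline{\Gamma}(e_i\M_n(S))$, the arc relation being the conjunction of the component relations; in particular the number of vertices is the product, not the sum, of the component vertex counts. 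Your parenthetical is also backwards: $e_iX\cdot e_jY=0$ for $i\neq j$ means that in the subdigraph induced on $\bigcup_i e_i\M_n(S)$ \emph{all} arcs between distinct components are present, not none.

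This matters because the general principle you invoke --- that the automorphism group of a disjoint union of labelled digraphs permutes isomorphic components and acts independently on each --- has no analogue for direct products: the automorphism group of a product of digraphs can be strictly larger than the corresponding product of wreath products. The assertion that every automorphism of $\overline{\Gamma_t}(\M_n(S))$ respects this factorization is precisely the content of Lemma~\ref{componentwise} and Theorem~\ref{characterization}, which you do quote at the outset, so the argument is repaired by deleting the disjoint-union paragraph and arguing instead: by Observation~\ref{obs}, $\Aut(\overline{\Gamma_t}(\M_n(S)))$ is exactly the set of automorphisms induced by elements of $\Aut(\Gamma(\M_n(S)))$, and by Theorem~\ref{characterization} these are exactly the maps $\overline{A}\mapsto\overline{\sum_i(\theta_{\pi_i}\circ\tau_i)(e_iA)}$, which form the asserted product of wreath products once the parametrization by $(\omega,(\pi_i),(\overline{\tau_i}))$ is shown to be faithful. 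The remaining verifications you list are correct and needed: that $\overline{\tau_i}$ ranges over all of $\Aut(\overline{\Gamma_t}(e_iS))$ (every label-preserving automorphism of $\overline{\Gamma_t}(e_iS)$ lifts to an automorphism of $\Gamma(e_iS)$, again by Observation~\ref{obs}), and that the $\theta_\pi$ and the entry-wise $\tau$ generate a genuine direct factor $\Sym(n)\times\Aut(\overline{\Gamma_t}(e_iS))$, using that the matrices $e_iE_{11},\ldots,e_iE_{nn}$ are pairwise non-twin by Theorem~\ref{twin}.
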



\begin{remark}
By Observation~\ref{obs}, $K\cong \prod_{i=1}^n \Sym(c_i)$, where the $c_i$'s are the sizes of the $\sim$-equivalence classes in $\Gamma(\M_n(S))$. Finding these sizes is in general quite difficult. For example, consider the following very basic situation: let $J\in \M_n(\BB)$ be the all-$1$'s matrix. By Theorem~\ref{twin}, twins of $J$ in $\Gamma(\M_n(\BB))$ are precisely the $n\times n$ $\{0,1\}$-matrices with no row or column of $0$'s. There is no known closed formula for the number of such matrices (see ~\cite{OEIS}).

\end{remark}

\begin{remark}
Throughout the paper, we restricted ourselves to studying  semirings with the property that no non-zero-divisor element can be written as a sum of infinitely many mutually orthogonal zero-divisors.  Obviously, any semiring with a finite set of zero-divisors satisfies this condition.
\end{remark}

\section*{Acknowledgements}
	The first author acknowledges the financial support from the Slovenian Research Agency  (research core funding no. P1-0222).

\end{document}